\title{The maximum likelihood degree of sparse polynomial systems} 
\date{\today}
\definecolor{hot}{RGB}{65,105,225}
\newtheorem{thm}{Theorem}[section]
\theoremstyle{definition}
\newtheorem{ex}[thm]{Example}
\newtheorem{theorem}[thm]{Theorem}
\newtheorem{lem}[thm]{Lemma}
\newtheorem{prop}[thm]{Proposition}
\newtheorem{cor}[thm]{Corollary}
\theoremstyle{definition} 
\newtheorem{defn}[thm]{Definition}
\newtheorem{rem}[thm]{Remark}
\numberwithin{equation}{section}
\newcommand{\newt}{{\mathrm{Newt}}}
\newcommand{\val}{{\mathrm{val}}}
\newcommand{\init}{{\mathrm{init}}}
\newcommand{\conv}{{\mathrm{Conv}}}
\newcommand{\mvol}{{\mathrm{MVol}}}
\newcommand{\magenta}[1]{{\color{magenta}#1}}
\newcommand{\JIR}{\magenta}
\newcommand{\NN}{\mathbb{N}_{\geq 0}}
\newcommand{\CC}{\mathbb{C}}
\newcommand{\AS}{\mathcal{A}}
\newcommand{\defcolor}[1]{{\color{RoyalBlue}#1}} 
\newcommand{\demph}[1]{\defcolor{{\sl #1}}}
\DeclareMathOperator{\Vertex}{Vert}
\DeclareMathOperator{\SL}{SL}
\newcommand{\lagObjective}{\Lambda}
\begin{document}

\author[Lindberg]{Julia Lindberg}
\address{
Julia Lindberg \\
Department of Electrical and Computer Engineering \\
University of Wisconsin-Madison \\ USA
}
\email{jrlindberg@wisc.edu}
\urladdr{\url{https://sites.google.com/view/julialindberg/home}}

\author[Nicholson]{Nathan Nicholson}
\address{Nathan Nicholson \\
Department of Math\\
University of Wisconsin-Madison \\ USA}
\email{nlnicholson@wisc.edu}
\urladdr{\url{https://math.wisc.edu/graduate-students/}}

\author[Rodriguez]{Jose Israel Rodriguez}
\address{
Jose Israel Rodriguez\\
Department of Math\\
University of Wisconsin-Madison \\
 USA
} 
\email{jose@math.wisc.edu}
\urladdr{\url{https://people.math.wisc.edu/~jose/}}

\author[Wang]{Zinan Wang}
\address{
Zinan Wang  \\
Department of Math\\
University of Wisconsin-Madison \\
USA
}
\email{zwang894@math.wisc.edu}
\urladdr{\url{https://sites.google.com/wisc.edu/zinanwang/}}

\begin{abstract}

    We consider statistical models arising from the common set of solutions to a sparse polynomial system with general coefficients. The maximum likelihood degree counts the number of critical points of the likelihood function restricted to the model. 
    We prove the maximum likelihood degree of a sparse polynomial system is determined by its Newton polytopes and
    equals the mixed volume of a related Lagrange system of~equations.

\noindent \textbf{Key words}:
Mixed volume, Maximum likelihood degree, 
Polynomial optimization.

\noindent\textbf{MSC2020}: 
62R01, 
14M25,
90C26, 
14Q15, 
52B20, 
13P15. 
\end{abstract}

\maketitle

\section{Introduction}\label{s:introduction}
Maximum likelihood estimation is a statistical method of density estimation that seeks to maximize the probability that a given set of samples comes from a distribution. Given 
independent and identically distributed (iid)
 samples $s^{(1)},\ldots, s^{(N)}$ we can form a \demph{data vector} 
$u \in \triangle_{n-1} := \{p \in \mathbb{R}^n_{> 0} : \sum_{i=1}^n p_i = 1 \}$ 
 which counts the fraction of times each event happened in the sample set $s^{(1)},\ldots, s^{(N)}$.

Given $u$, the \demph{log likelihood function} for a discrete random variable is given by 
\[ \log(p_1^{u_1} \cdots p_n^{u_n}) = u_1 \log(p_1) + \ldots + u_n \log(p_n). \ \]
Maximum likelihood estimation aims 
to select the set of points $p \in \triangle_{n-1}$ that maximizes the likelihood that $u$ came from that distribution. In many instances, we assume that our density $p$ lives in a \demph{statistical model} $\mathcal{M} \subseteq \triangle_{n-1}$. In this setup, maximum likelihood estimation amounts to solving the (often) nonconvex optimization~problem 
\[  \max_p \ u_1 \log(p_1) + \ldots + u_n \log(p_n) \quad \text{subject to} \quad p \in \mathcal{M}. \]

This is the primary problem under consideration. While nonconvex optimization is often much more challenging than its convex counterpart, methods exist to tackle this problem. We consider the setup where $\mathcal{M}$ is defined by a set of polynomial equations and use tools from algebraic geometry to study the critical points of this optimization problem. 
This problem has been studied from several points of view.
An algebraic geometry approach and definition of maximum likelihood (ML) degree 
was made in \cite{MR2230921, HostenSolving}. 
The results  in \cite{MR3103064} 
show that the ML degree of a smooth variety equals a signed Euler characteristic, and
in the case of a hypersurface, that the ML degree equals a signed volume of a Newton polytope.
For the singular case, formulas for the ML Degree are given by the Euler obstruction function~\cite{MR3686780}. 
ML degrees also make an appearance in toric geometry \cite{MR3907355, MR4103774} and are studied for other statistical models~\cite{AKRS2021,DM2021,MR2988436,MR4219257,MR4196404}.

Specifically, we consider when $\mathcal{M}$ is given by the variety of a system of \demph{sparse polynomial equations}. 
Sparse polynomials have been studied in several 
contexts in numerical algebraic geometry~\cite{ huber1995a,VVC}.
A good introduction to this material is \cite[Chapter 3]{Sturmfels-CBMS}. 
Following similar conventions as those in \cite{SDSS}, 
we specify a family of sparse polynomials by its monomial support using the following notation.
For  each 
$\alpha = (\alpha_1,...,\alpha_n)\in \NN^n$, 
the monomial $x^\alpha := x_1^{\alpha_1}\cdots x_n^{\alpha_n}$ with exponent $\alpha$ is
the map $x^{\alpha}:\CC^n \to \CC$. 
A sparse polynomial is a linear combination of monomials. 
Let $\AS_\bullet =(\AS_1,\dots, \AS_k)$ denote a $k$-tuple of nonempty finite subsets of $\NN^n$.
A general sparse polynomial system of equations with support $\AS_\bullet$ is given by 
\begin{align*}
\begin{split}
    \sum_{\alpha \in \AS_1} c_{1,\alpha}x^\alpha = \ldots = \sum_{\alpha \in \AS_k} c_{k,\alpha}x^\alpha =0, 
\end{split}
\end{align*}
where the coefficients $\{c_{i,\alpha } \}_{\alpha\in \AS_i, i\in [k]}$ are general. 

\begin{rem}
The concept of genericity is fundamental in applied algebraic geometry.
Throughout the rest of this paper we consider a \demph{general sparse polynomial system} $F$ and \JIR{\demph{general data vector}} $u$. Formally, we require that the coefficients of the polynomials of $F$ and entries of $u$ lie in a dense Zariski open set.
\end{rem}

Related work has considered a similar optimization problem 
\begin{equation}\label{eq:optimization-problem}
 \min_{x \in \mathbb{R}^n} \ g(x) \quad \text{subject to} \quad x \in \mathcal{X} 
\end{equation}
where $\mathcal{X} $ is a real algebraic variety and $g$ is a specified objective function. A particular choice of $g$ that is of interest is when $g = \lVert x - u \rVert_2^2$ for a point $u \in \mathbb{R}^n$. This is called the \demph{Euclidean distance function} and the number of critical points to this optimization problem for general $u$ is called the \demph{ED degree of $\mathcal{X}$}. The study of ED degrees began with \cite{DraismaTheEDD} and initial bounds on the ED degree of a variety were given in \cite{MR3451425}. Other work has found the ED degree for real algebraic groups \cite{baaijensRealAlgGroups}, Fermat hypersurfaces \cite{leeFermat}, orthogonally invariant matrices \cite{drusvyatskiyOrthogonally}, smooth complex projective varieties \cite{aluffiEDComplex}, the multiview variety \cite{maximMultiview}, and when $\mathcal{X}$ is a hypersurface \cite{breiding2020euclidean}. Further work has considered instances of this problem when the data $u$ are not general \cite{maximDefect} as well as when the semidefinite relaxation is tight~\cite{MR4115654}.

A final connection is when the objective function in \eqref{eq:optimization-problem} is a polynomial. %
In this case, the number of critical points is called the \demph{algebraic degree of the optimization problem. }
In \cite{MR2507133},  
the algebraic degree of \eqref{eq:optimization-problem} 
is considered when $\mathcal{X}=V(f_1,\dots,f_k)$ with $f_i$ and $g$ are all generic polynomials of some degree. By ~\cite[Proposition 2.1]{MR2507133}
the number of solutions $(x^*,\lambda_1^*,\dots,\lambda_k^*)\in \CC^{n+k}$ to the the \demph{Karush-Kuhn-Tucker (KKT)-system}
\begin{align*}
    \nabla g(x^*)+\sum_{i=1}^k \lambda_i^* \nabla f_i(x^*) = 0 \\ 
f_1(x^*)=\cdots = f_k(x^*)=0
\end{align*}
is the algebraic degree. Moreover, a formula for this degree is given in \cite[Theorem 2.2]{MR2507133} in terms of the degrees of $g$ and $f_1,\dots,f_k$.
Other formulas for many classes of convex polynomial optimization problems are given in 
\cite{MR2496496}
and \cite{MR2546336}.
Related topics and background on algebraic optimization problems and the corresponding convex geometry can be found in \cite{blekhermanConvexAGBook}.

\section{The ML degree of sparse systems}\label{s:mldegree}

Let $F:\mathbb{R}^n \to \mathbb{R}^k$ be a sparse polynomial system with general coefficients. Let $u \in \mathbb{R}_{> 0}^n$ be a general point. Here $u$ is the data and $F = \langle f_1,\ldots, f_k \rangle$ gives the model.
We want to solve the maximum likelihood optimization problem:
\begin{equation}\label{mleoptimization}
\sup_{x\in \mathbb{R}_{> 0}^n} \  \sum_{i=1}^n u_i \log(x_i) \qquad \text{ subject to } \qquad x \in \mathcal{V}(F). \tag{MLE}
\end{equation}

One approach to solving \eqref{mleoptimization} is to find all critical points which can be done using Lagrange multipliers. 
The \demph{Lagrangian function} for  \eqref{mleoptimization} is defined as 
\begin{align}
    \lagObjective(x_1,\ldots, x_n, \lambda_1,\ldots, \lambda_k) &:= \sum_{i=1}^n u_i \log(x_i) - \sum_{j=1}^k \lambda_j f_j.
\end{align}
To find all complex critical points of \eqref{mleoptimization} we solve the square polynomial system 
$\mathcal{L}: \mathbb{C}^{n + k} \to \mathbb{C}^{n+k}$ 
obtained by taking the partial derivatives of $\lagObjective$. The partial derivatives~are 

\begin{align}
\frac{\partial}{\partial x_i} \lagObjective &= \frac{u_i}{x_i} - \frac{\partial}{\partial x_i} \Big( \sum_{j=1}^k \lambda_j f_j \Big), &&\quad i \in [n], \\
\frac{\partial }{\partial \lambda_j} \lagObjective &= -f_j, &&\quad j \in [k]. \label{eq:partiallambda}
\end{align}

Multiplying $\frac{\partial}{\partial x_i} \lagObjective$ by $x_i$
clears the denominators to get the polynomials

\begin{align}\label{lagrangemulti}
    \ell_i &:= x_i \cdot \frac{\partial }{\partial x_i} \lagObjective
    = u_i - x_i \sum_{j=1}^k \lambda_j \frac{\partial}{\partial x_i} (f_j ), \quad i\in[n]. 
\end{align}

Using the notation in \eqref{eq:partiallambda}-\eqref{lagrangemulti},
the \demph{ML system} of $F$ is 
\begin{equation}\label{eq:MLsystem}
    \mathcal{L}(F) = \langle \ell_1,\ldots, \ell_n, f_1,\ldots, f_k \rangle,
\end{equation}
a system in $n+k$ unknowns and $n+k$ equations.
In the literature, the ML system is also known as the Lagrange likelihood equations. We use the former terminology for brevity.
The \demph{ML degree} of $F$ is the number of complex (real or non-real) solutions to $\mathcal{L}(F)$ for generic data.

The following proposition shows that the ML degree of a sparse polynomial system is well defined. 

\begin{prop}\label{prop:finite_solutions} 
For a general sparse polynomial system $F = \langle f_1,\ldots, f_k\rangle$ 
and for generic data $u$, 
the corresponding ML system has finitely many solutions in $\CC^n\times \CC^k$. 
Moreover, all solutions to the ML system are in 
$ ( \CC^* )^n \times (\CC^*)^k$.

\end{prop}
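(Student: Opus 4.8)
The plan is to establish, in order, that (i) every solution has $x \in (\mathbb{C}^*)^n$, (ii) the solution set is finite, and (iii) every solution also has $\lambda \in (\mathbb{C}^*)^k$. The arguments are organized around the observation that each $\ell_i$ is affine-linear in the variables $\lambda$ and in the data $u$: writing $J(x)$ for the $n \times k$ matrix with entries $J_{ij}(x) = x_i\,\partial f_j/\partial x_i$, the equations $\ell_1 = \cdots = \ell_n = 0$ read $J(x)\lambda = u$. For (i), note that $x_i\,\partial f_j/\partial x_i = \sum_{\alpha \in \AS_j} \alpha_i\, c_{j,\alpha}\, x^\alpha$, and every surviving term is divisible by $x_i$ (the terms with $\alpha_i = 0$ have coefficient $0$). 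Hence $J_{ij}$ vanishes identically on $\{x_i = 0\}$, so at any point with $x_i = 0$ we would have $\ell_i = u_i$. Since $u_i \neq 0$ for generic $u$ (indeed $u \in \mathbb{R}^n_{>0}$), no solution can have $x_i = 0$, and therefore $x \in (\mathbb{C}^*)^n$.

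For (ii), the key input is the dimension bound $\dim\big(\mathcal{V}(F) \cap (\mathbb{C}^*)^n\big) \le n-k$ for general coefficients. I would obtain this from the standard incidence variety: letting $C$ denote the space of coefficient tuples $(c_{j,\alpha})$, form $I = \{(x,c) : f_j(x) = 0 \text{ for all } j\} \subseteq (\mathbb{C}^*)^n \times C$. For each fixed $x$ in the torus, every condition $\sum_{\alpha} c_{j,\alpha} x^\alpha = 0$ is a single nontrivial linear equation on the block $c_j$ (nontrivial because the monomials $x^\alpha$ are units on the torus), so the fiber over $x$ has codimension exactly $k$ and $\dim I = n + \dim C - k$. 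Projecting $I$ to $C$ then shows that for general $c$ the fiber $\mathcal{V}(F) \cap (\mathbb{C}^*)^n$ is either empty or of dimension $n-k$. Combining this with (i) and the affine-linear description above, the regular map $(x,\lambda,u) \mapsto (x,\lambda)$ identifies the full solution variety $Z \subseteq (\mathbb{C}^*)^n \times \mathbb{C}^k \times \mathbb{C}^n$ with $\big(\mathcal{V}(F) \cap (\mathbb{C}^*)^n\big) \times \mathbb{C}^k$ (the inverse restores $u = J(x)\lambda$), whence $\dim Z \le (n-k) + k = n$. Now consider the projection $\pi \colon Z \to \mathbb{C}^n$ to the $u$-coordinates. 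If $\pi$ is not dominant, a general $u$ lies outside $\overline{\pi(Z)}$ and has empty fiber; if $\pi$ is dominant, then $\dim Z = n$ forces the fiber over a general $u$ to be $0$-dimensional. In either case the fiber over a general $u$ is finite, and by (i) this fiber is exactly the solution set of $\mathcal{L}(F)$ in $\mathbb{C}^{n+k}$.

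For (iii), suppose a solution had $\lambda_{j_0} = 0$ for some $j_0$. Then $u = J(x)\lambda$ lies in the span of the columns of $J(x)$ indexed by $j \ne j_0$, while $x \in \mathcal{V}(F) \cap (\mathbb{C}^*)^n$; hence such $u$ form the image of a variety of dimension at most $(n-k) + (k-1) = n-1$. Taking the union over the finitely many choices of $j_0$ yields a proper subvariety of $\mathbb{C}^n$, which a general $u$ avoids. Therefore $\lambda \in (\mathbb{C}^*)^k$ at every solution, completing the three assertions.

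The step I expect to be the main obstacle is the dimension bound in (ii): one must be careful that the codimension-$k$ genericity is a statement about the coefficients $c$, to be invoked before the genericity of $u$, and one must correctly treat the non-dominant case of $\pi$, where finiteness is achieved through an empty general fiber rather than through a generically finite map. A minor point worth recording is that degenerate supports—for instance some $\AS_j$ consisting of a single lattice point, so that $f_j$ is a nonvanishing monomial—make $\mathcal{V}(F) \cap (\mathbb{C}^*)^n$ empty, in which case all three assertions hold vacuously.
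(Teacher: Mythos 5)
Your proof is correct, and its central engine is the same as the paper's: bound the dimension of the solution variety by $n$, project to the $u$-coordinates, and conclude that a generic fiber is finite (empty in the non-dominant case), with $x\in(\CC^*)^n$ forced by $\ell_i=u_i$ on $\{x_i=0\}$ and $u_i\neq 0$. You differ in two sub-steps. First, where the paper invokes Bertini's theorem to get that $\mathcal{V}(F)$ saturated by the coordinate hyperplanes is empty or of codimension $k$, you derive the bound $\dim(\mathcal{V}(F)\cap(\CC^*)^n)\le n-k$ from scratch via the incidence variety over the coefficient space, using that each condition $\sum_\alpha c_{j,\alpha}x^\alpha=0$ is a nontrivial linear condition on the block $c_j$ when $x$ lies in the torus; this is more self-contained and makes explicit exactly which genericity of $c$ is being used. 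Second, for the claim that no solution has $\lambda_{j_0}=0$, the paper argues by contradiction, observing that such a solution would solve the ML system of the remaining $k-1$ polynomials (finite by the first part) and that a general $f_{j_0}$ avoids those finitely many points; you instead do a direct dimension count, noting that the $u$ reachable from $\{\lambda_{j_0}=0\}$ form a constructible set of dimension at most $(n-k)+(k-1)=n-1$, which a general $u$ avoids. Both variants are sound; yours avoids the implicit induction and keeps all genericity conditions on $u$ rather than splitting them between $u$ and $f_k$, while the paper's is shorter given Bertini as a black box. One small point you leave implicit (as does the paper): when $\mathcal{V}(F)\cap(\CC^*)^n$ is reducible, the dominant/non-dominant dichotomy for $\pi$ should be applied component by component, which changes nothing in the conclusion.
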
 
\begin{proof}
This proof uses genericity in two different ways. 
First we use genericity of the coefficients of $f_1,\dots,f_k$. By Bertini's Theorem~\cite[Ch. III,\S 10.9.2]{MR0463157},
the variety of $\langle f_1,\dots, f_k \rangle$ saturated by the coordinate hyperplanes is either empty or codimension $k$. 
Denote this variety by $X$. 
Moreover, by Bertini's Theorem, if $k<n$, then the variety $X$ is irreducible.   

The polynomials $\ell_1,\dots,\ell_n$ give a map $X\times \mathbb{C}^k_\lambda\to \mathbb{C}^n_u$. 
The source of this map is $n$-dimensional and irreducible and therefore the image is at most $n$-dimensional and irreducible. 

Now we use genericity of the data. 
If the image is $n$ dimensional, then a fiber over a generic point in $\CC^n$ is zero dimensional.
This means the ML system for generic data has finitely many solutions. 
On the other hand, if the image is lower dimensional, then the fiber over a generic point is empty. In such a case the ML degree is zero.

Since $X$ is defined by saturating by the coordinate hyperplanes, we must show that there are still only finitely many solutions to the ML System in $(\CC^n\setminus (\CC^*)^n)\times \CC^k$.  
By the data being generic, we may assume the $u_i$ coordinate is nonzero. 
For $i=1,\dots, n$, having $u_i\neq 0$  and 
$\ell_i=0$ 
implies that the $x_i$ coordinate of the solution is not zero.
Therefore all solutions to the ML system are in $(\CC^*)^n\times \CC^k$.

We have shown the first statement and part of the second statement. It remains to show that there are no solutions with $\lambda_i = 0$ for $i \in [k]$. 
If we assume $\lambda_k^*=0$ by way of contradiction, 
then $(x_1^*,\dots,x_n^*,\lambda_1^*,\dots,\lambda_{k-1}^*)$
is a solution to the ML system of $f_1,\dots,f_{k-1}$.
By the argument above, 
this new ML system has finitely many solutions. By the genericity of $f_k$, none of these solutions will satisfy $f_k(x^*) = 0$.
\end{proof}

We remark that the arguments used in the first half of the proof are analogous to the ones presented in \cite[Proposition 3]{HostenSolving}.

\subsection{
Newton polytopes of likelihood equations
and the algebraic torus
}\label{ss:NPofLE}

We want to use existing results on sparse polynomial systems from algebraic geometry. To do this, we first need a few definitions.

\begin{defn}
For a polynomial $f\in \CC[x_1,\dots,x_n]$,
we define the \demph{Newton polytope} of $f$ as the convex hull of the set of exponent vectors of $f$. We denote this by $\newt(f),$ and we let $\Vertex(f)$ denote the set of vertices of $\newt(f)$.

\end{defn}

The next lemma describes the Newton polytopes of the ML system~\eqref{eq:MLsystem}. 
We use the notation $x_1\cdots x_n \mid f$ when there exists a polynomial $g$ such that $x_1\cdots x_n\cdot g=f$.

\begin{lem}\label{lem:newt_likelihood}
Consider a sparse polynomial system $F = \langle f_1,\ldots, f_k \rangle$. If $x_1 \cdots x_n \mid f_j$ for all $j \in [k]$, then for every $i \in [n]$ and $j \in [k]$, $\newt(f_j)= \newt(x_i \frac{\partial}{\partial x_i} f_j)$. Moreover, for every $i \in [n]$ the Newton polytope of $\ell_i$ is equal to
\[
 \newt(\ell_i) = \conv ( \{ 0_{n+k}  \} \cup \Vertex(\lambda_1 f_1)\cup\dots \cup \Vertex(\lambda_k f_k)   ).
\]
\end{lem}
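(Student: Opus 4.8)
The plan is to reduce both assertions to a bookkeeping of \emph{supports}: for a polynomial $f$ write $\mathrm{supp}(f)$ for the set of exponent vectors of the monomials occurring in $f$ with nonzero coefficient, and recall that $\newt(f)=\conv(\mathrm{supp}(f))$ and that $\Vertex(f)\subseteq\mathrm{supp}(f)$. The only genuinely geometric fact I will need is that a polytope is the convex hull of its vertices.

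For the first equality, expand $f_j=\sum_{\alpha\in\AS_j}c_{j,\alpha}x^\alpha$ with all $c_{j,\alpha}\neq 0$ (general coefficients). Differentiating and clearing the denominator gives $x_i\frac{\partial}{\partial x_i}f_j=\sum_{\alpha\in\AS_j}\alpha_i\,c_{j,\alpha}\,x^\alpha$, which has exactly the same monomials as $f_j$, each coefficient merely rescaled by $\alpha_i$. The hypothesis $x_1\cdots x_n\mid f_j$ forces $\alpha_i\geq 1$ for every $\alpha\in\AS_j$, so $\alpha_i c_{j,\alpha}\neq 0$ and no monomial is annihilated. Hence $\mathrm{supp}(x_i\frac{\partial}{\partial x_i}f_j)=\mathrm{supp}(f_j)$, and taking convex hulls yields $\newt(x_i\frac{\partial}{\partial x_i}f_j)=\newt(f_j)$.

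For the second equality, first I would rewrite $\ell_i = u_i-\sum_{j=1}^k \lambda_j\bigl(x_i\frac{\partial}{\partial x_i}f_j\bigr)$ and read off its exponents in $\R^{n+k}$, where a monomial $\lambda_j x^\alpha$ sits at $(\alpha,e_j)$ (with $e_j$ the $j$th standard basis vector in the $\lambda$-coordinates) and the constant $u_i$ sits at the origin $0_{n+k}$. Because each $\lambda_j$ occurs to the first power and is unique to its summand, monomials coming from different indices $j$ cannot cancel, and within the $j$th summand the first equality identifies the surviving exponents as precisely $\{(\alpha,e_j):\alpha\in\AS_j\}=\mathrm{supp}(\lambda_j f_j)$; genericity of $u$ keeps $u_i\neq 0$, so the origin survives. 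Thus $\mathrm{supp}(\ell_i)=\{0_{n+k}\}\cup\bigcup_{j=1}^k \mathrm{supp}(\lambda_j f_j)$, and $\newt(\ell_i)$ is the convex hull of this set.

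It then remains to replace each $\mathrm{supp}(\lambda_j f_j)$ by $\Vertex(\lambda_j f_j)$ inside the convex hull. The inclusion $\Vertex(\lambda_j f_j)\subseteq\mathrm{supp}(\lambda_j f_j)$ gives one containment; for the reverse, every point of $\mathrm{supp}(\lambda_j f_j)$ lies in $\newt(\lambda_j f_j)=\conv(\Vertex(\lambda_j f_j))$ and hence in the claimed hull, so the two convex hulls agree. I expect the main (mild) obstacle to be the bookkeeping in the combined $(x,\lambda)$ exponent lattice, namely correctly placing $\lambda_j x^\alpha$ at $(\alpha,e_j)$, using genericity to rule out both annihilation of monomials and cancellation across summands, and not dropping the constant term $u_i$; the convex-geometry step is then immediate.
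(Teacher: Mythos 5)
Your proposal is correct and follows essentially the same route as the paper: the paper's (much terser) proof likewise derives the first claim from the formula $x_i\frac{\partial}{\partial x_i}x^\alpha=\alpha_i x^\alpha$ together with $\alpha_i\geq 1$ forced by $x_1\cdots x_n\mid f_j$, and disposes of the second claim as an immediate consequence of the definition of $\ell_i$ and of Newton polytopes. Your version simply makes explicit the support bookkeeping and the replacement of supports by vertex sets that the paper leaves implicit.
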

\begin{proof}

The proof of the first statement follows from the fact that a Newton polytope is determined by its vertices, and that 
\[
\frac{\partial}{\partial x_i} 
\left(
x_1^{\alpha_1}\cdots x_n^{\alpha_n} 
\right)
=
\begin{cases}
0&  \alpha_i=0, \\ 
\alpha_i \frac{x_1^{\alpha_1}\cdots x_n^{\alpha_n}}{x_i}   & \text{otherwise}.
\end{cases}
\]
The proof of the second statement follows from the the definition of the likelihood equations and Newton polytopes.

\end{proof}

The following example provides an intuitive description of
Lemma~\ref{lem:newt_likelihood}.

\begin{ex}\label{ex:newtFvsFhat}

Let $F = f =  \langle  2x^4 + 3y^3 - 5\rangle$, and consider variable ordering $(x, y, \lambda)$.~Then 
the Newton polytopes given by $\mathcal{L}(f)$ are
\begin{align*}
\newt(f) &= \conv(\{(4, 0, 0), (0, 3, 0), (0, 0, 0)\}), \\
\newt(\ell_1) &= \conv(\{(0, 0, 0), (4, 0, 1)\}), \text{ and } \\
\newt(\ell_2) &=\conv( \{(0, 0, 0), (0, 3, 1)\}). 
\end{align*}
These are different from the Newton polytopes coming from $\mathcal{L}(\hat f)$, where 
    $\hat{f} =  \langle  xy \cdot f\rangle$:
\begin{align*}
    \newt(\hat{f})  &= \conv(\{(5, 1, 0), (1, 4, 0), (1, 1, 0)\}), \text{ and } \\
    \newt(\hat{\ell_1}) = \newt(\hat{\ell_2}) &= \conv(\{(5, 1, 1), (1, 4, 1), (1, 1, 1), (0, 0, 0)\}). 
\end{align*}

\begin{figure}[htbp]
\centering
\includegraphics[width=5.2cm]{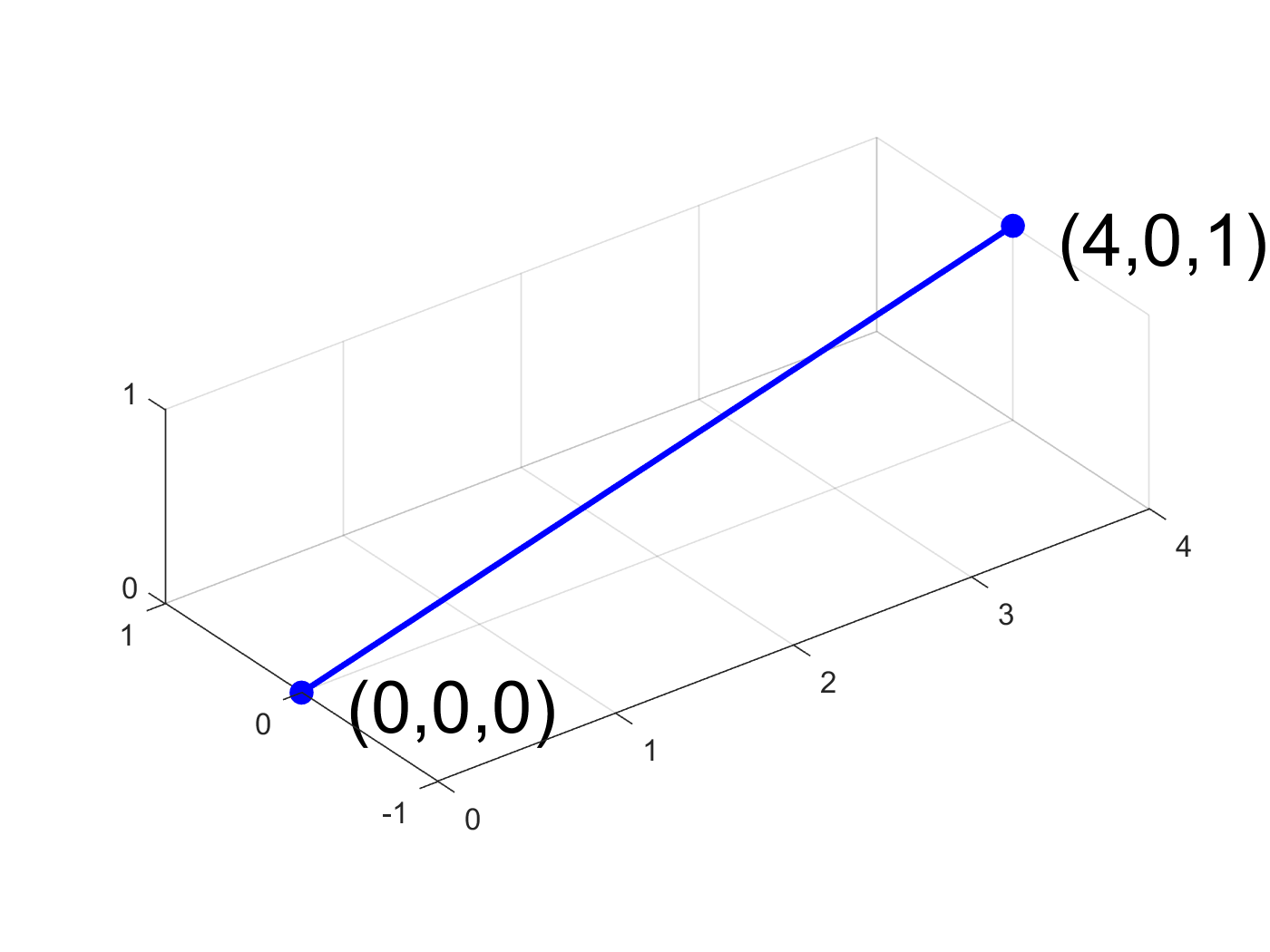}
\includegraphics[width=5.2cm]{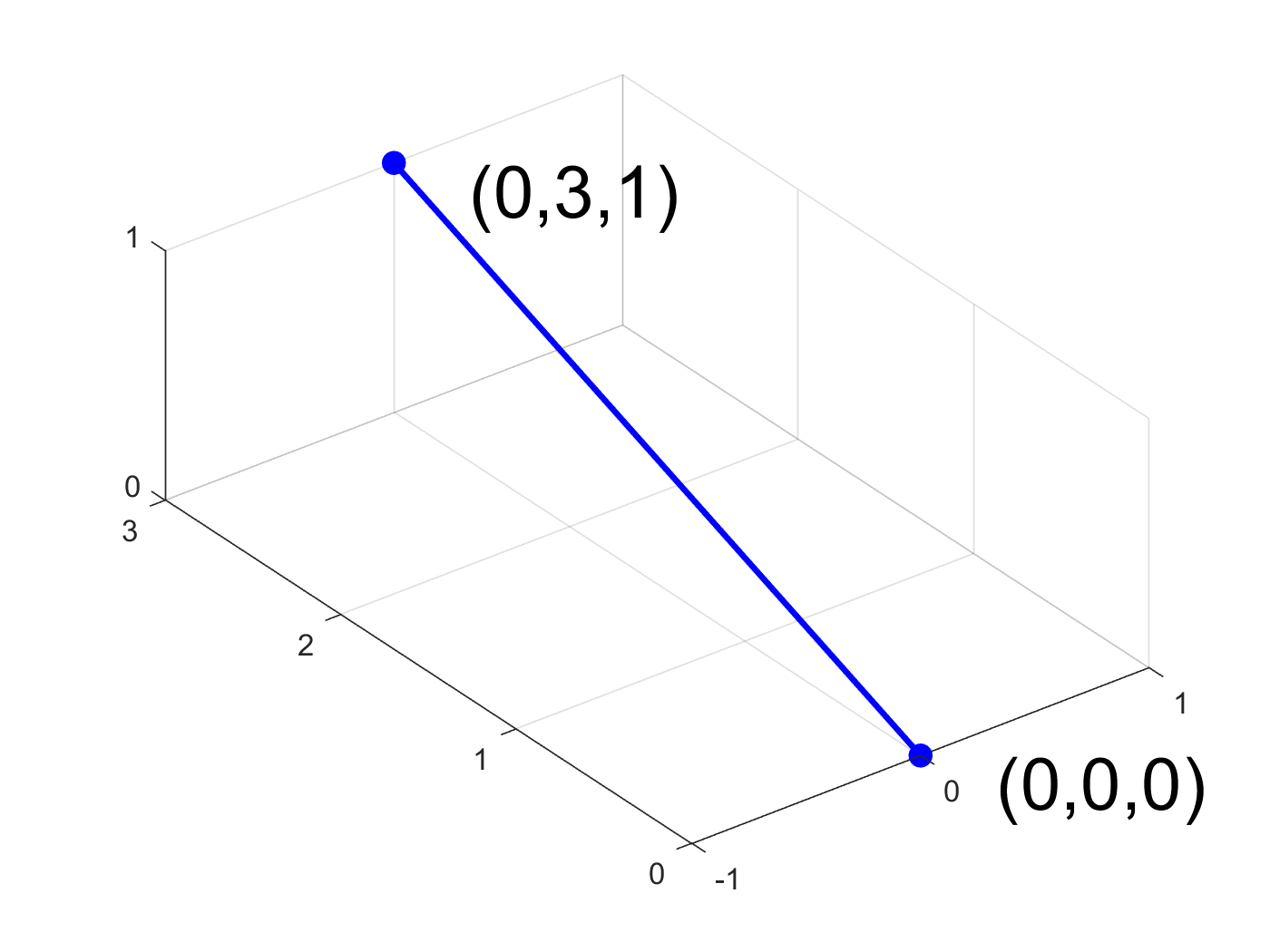}
\includegraphics[width =5.2cm]{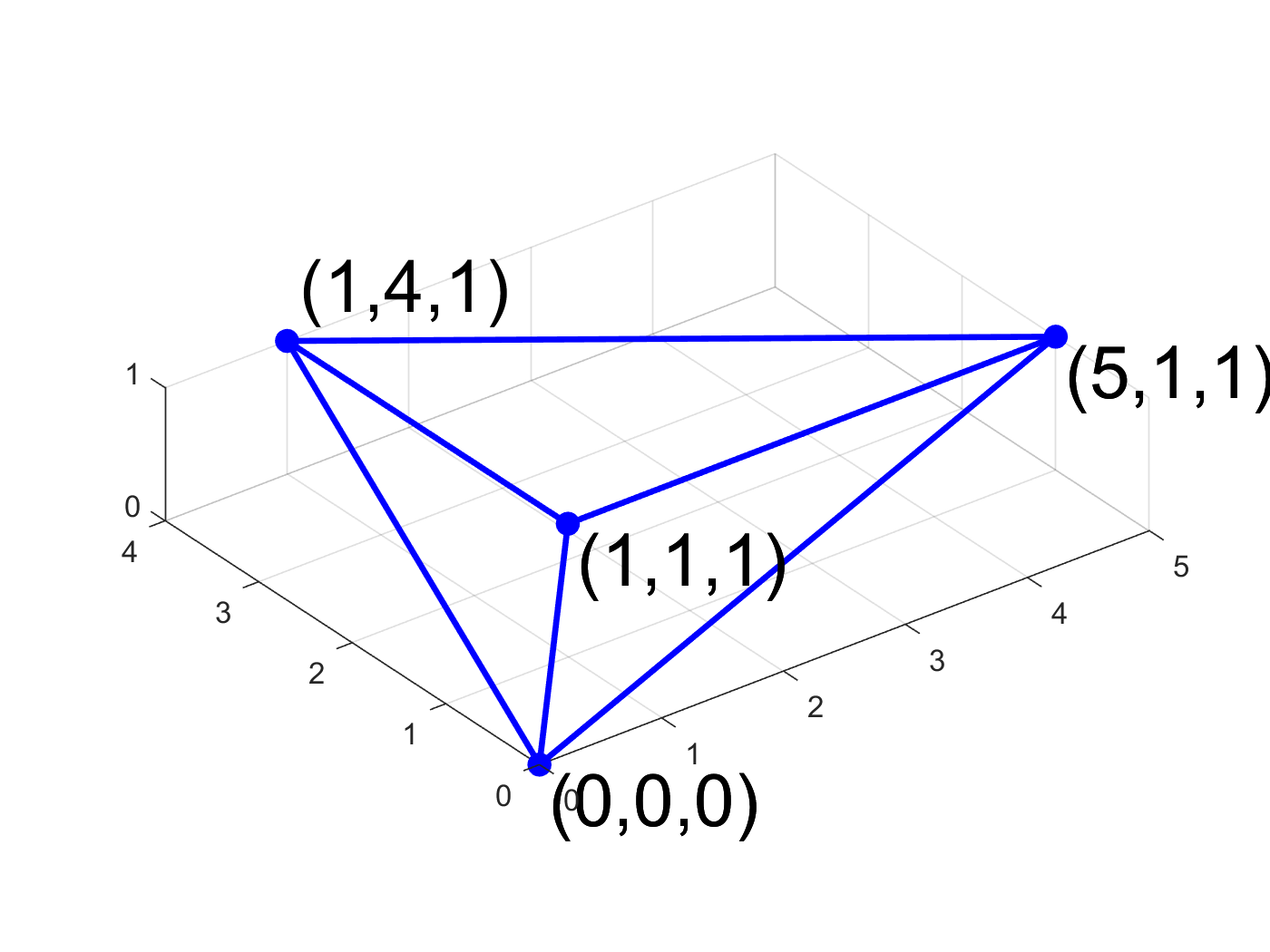}
\caption{$\newt(\ell_1), \newt(\ell_2)$ and $\newt(\hat{\ell_1}) = \newt(\hat{\ell_2})$ from Example~\ref{ex:newtFvsFhat}}
\label{fig:newtpoly}
\end{figure}

\end{ex}

The following proposition shows that the  assumption $x_1 \cdots x_n \mid f_j$ for all $j \in [k]$ in Lemma~\ref{lem:newt_likelihood} is not an issue. 

\begin{prop}\label{prop:Fhat_equals_F}
Let $F = \langle f_1,\ldots,f_k \rangle$ and $\hat{F} = \langle\hat{f}_1, \ldots, \hat{f}_k\rangle$ where 
$f_j \in \mathbb{C}[x_1,\ldots, x_n]$ and $\hat{f}_j = x_1\cdots x_n \cdot f_j$ for $j \in [k]$. The ML degree of $F$ equals the ML degree of $\hat{F}$. 
\end{prop}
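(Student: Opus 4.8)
The plan is to exhibit an explicit bijection between the solutions of $\mathcal{L}(F)$ and those of $\mathcal{L}(\hat F)$ for one and the same generic data vector $u$, which immediately forces the two ML degrees to agree. The crucial preliminary is to invoke Proposition~\ref{prop:finite_solutions} for \emph{both} systems: since $\hat f_j = x_1\cdots x_n\cdot f_j$ has exactly the same coefficients as $f_j$ on the shifted support $\AS_j + \mathbf{1}$, genericity of the coefficients of $F$ gives genericity for $\hat F$, so the proposition applies and every solution of either ML system lies in $(\CC^*)^n\times(\CC^*)^k$. It therefore suffices to construct the bijection on the torus, where $x_1\cdots x_n \neq 0$ and the two varieties $\mathcal{V}(F)$ and $\mathcal{V}(\hat F)$ coincide.

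First I would record the effect of the multiplier $m := x_1\cdots x_n$ on partial derivatives. The product rule gives
\[
x_i \frac{\partial \hat f_j}{\partial x_i} = x_i\frac{\partial}{\partial x_i}(m f_j) = m f_j + m\, x_i\frac{\partial f_j}{\partial x_i}.
\]
At any torus solution we have $\hat f_j = 0$ and $m\neq 0$, hence $f_j = 0$, so the first term drops out and $x_i\frac{\partial \hat f_j}{\partial x_i} = m\, x_i\frac{\partial f_j}{\partial x_i}$ there. Writing $\mu$ for the Lagrange multipliers of $\hat F$, the likelihood equations of $\hat F$ thus reduce, on the common torus zero set of $f_1,\dots,f_k$, to
\[
\hat\ell_i = u_i - x_i\sum_{j=1}^k \mu_j\frac{\partial \hat f_j}{\partial x_i} = u_i - x_i\sum_{j=1}^k (m\,\mu_j)\frac{\partial f_j}{\partial x_i}.
\]

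The key observation is then the rescaling $\lambda_j = m\,\mu_j$. Because $m\neq 0$ on the torus, the map $(x,\mu)\mapsto (x, m\,\mu)$ is a bijection of $(\CC^*)^n\times(\CC^*)^k$ onto itself, with inverse $(x,\lambda)\mapsto (x,\lambda/m)$; in particular it preserves the condition $\mu_j\neq 0$. Under this substitution the displayed equation becomes exactly $\ell_i = u_i - x_i\sum_j \lambda_j\frac{\partial f_j}{\partial x_i}$, while $\hat f_j = 0$ and $f_j = 0$ cut out the same torus points. Consequently the map sends solutions of $\mathcal{L}(\hat F)$ bijectively to solutions of $\mathcal{L}(F)$, and the ML degrees coincide. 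I expect the only real subtlety to be bookkeeping rather than a genuine obstacle: one must use Proposition~\ref{prop:finite_solutions} for both systems to be sure no solutions are lost on the coordinate hyperplanes, where $m = 0$ and the rescaling would degenerate, and must confirm that $\hat F$ inherits genericity from $F$. Once everything is restricted to the torus, the product-rule identity together with the linear rescaling of the $\lambda_j$ does all the work, and checking that the two maps are mutually inverse and solution-preserving is routine.
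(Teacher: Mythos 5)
Your proposal is correct and follows essentially the same route as the paper: both invoke Proposition~\ref{prop:finite_solutions} to restrict attention to torus solutions, use the product rule to kill the $m f_j$ term on the zero set, and rescale the Lagrange multipliers by $x_1\cdots x_n$ to get an explicit bijection (your map $(x,\lambda)\mapsto(x,\lambda/m)$ is exactly the paper's $\phi$). Your extra remark that $\hat F$ inherits genericity because its coefficients agree with those of $F$ on the shifted supports is a point the paper leaves implicit, and is worth stating.
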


\begin{proof}
Recall the definition of ML system in \eqref{eq:MLsystem}, and
 let 
\[
\mathcal{L}(F) = \langle \ell_1,\ldots,\ell_n,f_1,\ldots,f_k \rangle \text{ and }\mathcal{L}(\hat{F})= \langle \hat{\ell}_1,\ldots,\hat{\ell}_n,\hat{f}_1,\ldots,\hat{f}_k \rangle.
\] 
By Proposition~\ref{prop:finite_solutions} it suffices to show that there is a bijection between $\mathcal{V}(\mathcal{L}(F)) \cap (\mathbb{C}^*)^{n+k} $ and $\mathcal{V}(\mathcal{L}(\hat{F})) \cap (\mathbb{C}^*)^{n+k}$. We claim such a bijection is given by
\begin{align*}
    \phi: \mathcal{V}(\mathcal{L}(F))\cap(\mathbb{C}^*)^{n+k} &\to \mathcal{V}(\mathcal{L}(\hat{F}))\cap (\mathbb{C}^*)^{n+k} \\
    (x_1,\ldots,x_n,\lambda_1,\ldots,\lambda_k) &\mapsto (x_1,\ldots, x_n,\frac{\lambda_1}{x_1\cdots x_n}, \ldots, \frac{\lambda_k}{x_1\cdots x_n}) 
\end{align*}
We need to show that $\phi$ is well defined. Since we assume 
$(x,\lambda) \in  (\mathbb{C}^*)^{n+k}$,
$\frac{\lambda_i}{x_1\cdots x_n}$ is well defined. Now observe that if $f_j(x_1,\ldots,x_n) = 0$ then $\hat{f}_j = x_1\cdots x_n \cdot f_j(x_1,\ldots,x_n) = 0$ so we only need to show $\hat{\ell}_i$ vanishes on the image of $\phi$. By definition,
\begin{align*}
    \hat{\ell}_i &= u_i - x_i \cdot \sum_{j=1}^k \lambda_j \frac{\partial}{\partial x_i}(x_1\cdots x_n f_j) \\
    &= u_i - x_i \cdot \sum_{j=1}^k \lambda_j (x_1\cdots x_{i-1}x_{i+1}\cdots x_n f_j + x_1\cdots x_n \frac{\partial}{\partial x_i}(f_j)).
\end{align*}

Since $f_j(x_1,\ldots,x_n) = 0$ the first term in the summand vanishes. Substituting $\lambda_j \mapsto \frac{\lambda_j}{x_1\cdots x_n}$, the result is then clear.

Consider
\begin{align*}
    \phi^{-1}: \mathcal{V}(\mathcal{L}(F))\cap(\mathbb{C}^*)^{n+k} &\to \mathcal{V}(\mathcal{L}(\hat{F}))\cap (\mathbb{C}^*)^{n+k} \\
    (x_1,\ldots, x_n, \lambda_1,\ldots, \lambda_k) &\mapsto (x_1,\ldots, x_n,x_1 \cdots x_n\lambda_1,\ldots, x_1 \cdots x_n \lambda_k).
\end{align*}

It is clear that the map $\phi \circ \phi^{-1} = \phi^{-1} \circ \phi$ is the identity, and that
\[
\phi^{-1}(x_1,\ldots,x_n,\lambda_1,\ldots,\lambda_k) \in \mathcal{V}(\mathcal{L}(F)) \cap (\mathbb{C}^*)^{n + k}.
\]
\end{proof}

\subsection{Initial systems of the likelihood equations}\label{ss:init-likelihood-equations}
We now consider the geometry of the Newton polytopes of the likelihood equations and how it relates to the number of $\mathbb{C}^*$ solutions to these equations. 

Given a nonzero vector $ w \in \mathbb{Z}^n$ and a polytope $P \subseteq \mathbb{R}^n$, we denote $P_w$ as the \demph{face exposed} by $w$ and $\val_w(P)$ the \demph{value} $w$ takes on this face. Specifically:
\[P_w = \{x \in P : \langle w,x \rangle \leq \langle w,y \rangle \ \text{ for all } \ y \in P \} \quad \text{and } \quad \val_w(P) = \min_{x \in P} \langle w,x \rangle,\]
with $\langle (w_1,\dots,w_n),(x_1,\dots,x_n) \rangle
:=w_1x_1+\cdots w_nx_n$.
If $f = \sum_{\alpha \in \newt(f)} c_\alpha x^{\alpha}$, we call
\[
\init_w(f) = \sum_{\alpha \in (\newt(f))_w} c_\alpha x^{\alpha}
\]
the \demph{initial polynomial} of $f$. For convenience, let $\newt_w(f)$ denote $(\newt(f))_w$ and $\val_w(f) = \val_w(\newt(f))$.
For more background on initial polynomials, see \cite[Chapter~2]{idealscox}.

For convex bodies 
$K_1,\dots,K_n$
in $\mathbb{R}^n$, 
consider the Minkowski sum 
$\mu_1 K_1 +\cdots +\mu_n K_n.$
The volume of
$\mu_1 K_1 +\cdots +\mu_n K_n$
as a function of $\mu_1,\dots, \mu_n$
is a homogeneous polynomial $Q(\mu_1,\dots,\mu_n)$ of degree $n$.
The \demph{mixed volume} of 
$K_1,\dots,K_n$, denoted $\mvol(K_1,\ldots, K_n)$,
is defined to be 
the coefficient of $\mu_1\cdots\mu_n$ in $Q$.
For more details about mixed volumes see~\cite{mvoltext}. 

There are three important properties of the mixed volume which we wish to highlight: 
\begin{enumerate}
    \item \textbf{Translation Invariance:} $\mvol(K_1,\ldots, K_n) = \mvol(a + K_1,\ldots, K_n)$ for $a \in \mathbb{R}^n$, 
    \item \textbf{Monotonicity:} $\mvol(\tilde K_1,K_2\ldots,K_n) \leq \mvol( {K}_1,\ldots, K_n)$ when $\tilde K_1 \subseteq {K}_1$,
    \item \textbf{Special Linear Invariance:} $\mvol(K_1,\ldots, K_n) = \mvol(\phi K_1,\ldots, \phi K_n)$ for any 
   $\phi$ in the special linear group $\SL_n(\mathbb{R}$).
\end{enumerate}

Recent work has analyzed when the monotonicity inequality is strict \cite{bihan2019criteria}.
The connection between the convex geometry of a polynomial system and the number of $\mathbb{C}^*$ solutions to this system was made in a sequence of papers \cite{bernshtein1979the,khovanskii1978newton,kouchnirenko1976polyedres}.

\begin{theorem}[BKK bound 
]\label{thm:BKK}
Let $G = \langle g_1,\dots,g_n\rangle$ be a general sparse polynomial system in
$\mathbb{C}[x_1,\dots,x_n]$ and let $P_1,\dots,P_n$ be their respective Newton polytopes. The number of $\CC^*$-solutions to $F=0$ is equal to $\mvol(P_1,\ldots,P_n)$. Moreover, $\mvol(P_1,\ldots,P_n)$ is an  upper bound for the number of isolated solutions in $(\CC^*)^n$ for a system with arbitrary coefficients. {If for every nonzero $w \in \mathbb{Z}^n$,} the initial systems $\init_w(g_1,...,g_n)$ have no solutions in $(\CC^*)^n$, then all the roots of the system are isolated. 
\end{theorem}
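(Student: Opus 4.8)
The plan is to establish the three assertions—the exact count for generic coefficients, the upper bound for arbitrary coefficients, and the isolatedness criterion—by the method of toric deformation, following Bernstein. The unifying idea is to track solutions in $(\CC^*)^n$ along a one-parameter family and to read off their limiting behavior from the initial systems $\init_w(g_1,\dots,g_n)$. This fits the valuation/initial-form language already set up above and connects directly to the polyhedral homotopy machinery.

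For the exact count, I would first choose generic lifting functions $\omega_i \colon \R \to \R$ on the exponent vectors appearing in $g_i$, and form the lifted system whose coefficients are scaled by $t^{\omega_i(\alpha)}$ for a new parameter $t$. These lifts induce a coherent mixed subdivision of the Minkowski sum $P_1 + \cdots + P_n$. The key combinatorial input is that $\mvol(P_1,\dots,P_n)$ equals the sum, over the mixed cells of this subdivision, of their lattice volumes (in the normalization matching the definition of mixed volume). Each mixed cell is exposed by some direction $w$ for which $\init_w(g_1,\dots,g_n)$ is a binomial system, and the number of solutions of that binomial system in $(\CC^*)^n$ is exactly the lattice volume of the cell. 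The heart of the argument is then to expand each solution of the lifted system as a Puiseux series in $t$ and to show that, as $t \to 0$, its leading coefficient solves one of these binomial initial systems, while conversely each binomial solution lifts uniquely to a branch of the family. This yields a bijection between the $(\CC^*)^n$-solutions and the disjoint union of the initial-system solutions, so the count is $\mvol(P_1,\dots,P_n)$.

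For the upper bound with arbitrary coefficients, I would invoke conservation of number: the number of isolated solutions in $(\CC^*)^n$ is upper semicontinuous as the coefficients range over the affine space of all choices. Specializing from generic coefficients, where every solution is simple and the count is the mixed volume, to arbitrary coefficients can only destroy isolated solutions—by merging them or by pushing them to the boundary of the torus—so the isolated count never exceeds $\mvol(P_1,\dots,P_n)$. For the isolatedness criterion, the hypothesis that $\init_w(g_1,\dots,g_n)$ has no solutions in $(\CC^*)^n$ for every nonzero $w \in \mathbb{Z}^n$ is precisely what forbids solution paths from diverging to the torus boundary or to infinity. I would argue by contradiction through valuations: a positive-dimensional component of the solution set in $(\CC^*)^n$, or a one-parameter family of solutions leaving every compact subset of the torus, would possess a well-defined valuation vector $w$, and passing to the corresponding initial forms would produce a nonempty solution set of $\init_w(g_1,\dots,g_n)$ in $(\CC^*)^n$, contradicting the hypothesis. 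Hence all solutions are isolated, and together with the upper bound and the generic count they number exactly $\mvol(P_1,\dots,P_n)$.

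The main obstacle is the Puiseux-series correspondence of the second paragraph: one must show rigorously that no solutions are lost in the limit $t \to 0$ and that multiplicities are preserved, so that the correspondence between torus solutions and mixed-cell initial solutions is an exact bijection rather than merely an inequality in one direction. Controlling the branches at $t=0$ and verifying that distinct mixed cells contribute distinct solutions is where the genericity of the lifts $\omega_i$ and of the coefficients must be used most carefully. By comparison, the semicontinuity argument for the upper bound and the valuation argument for the isolatedness criterion are comparatively soft once the tropical/initial-form dictionary is in place.
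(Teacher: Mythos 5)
The paper does not prove this theorem; it states it as a classical result and cites the original sources of Bernstein, Khovanskii, and Kouchnirenko, so there is no internal proof to compare your plan against. Your outline follows the standard route (toric degeneration via generic lifts, coherent mixed subdivisions, binomial initial systems on mixed cells, Puiseux-series tracking as $t\to 0$, and a Bieri--Groves-type valuation argument for the isolatedness criterion), and every step you name is the right step in that classical argument, including the observation that a torus solution of the initial ideal is a fortiori a common torus solution of the generators' initial forms.

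That said, what you have written is a plan rather than a proof, and the part you yourself flag as the ``main obstacle'' is genuinely the entire content of the theorem: establishing that the Puiseux branches at $t=0$ biject with the solutions of the binomial initial systems, with no solutions lost to the torus boundary and with multiplicities accounted for, is where all the work lies, and you defer it. Two smaller points deserve care if you carry this out. First, your upper-bound argument invokes ``upper semicontinuity'' in the wrong direction: the number of isolated solutions is not semicontinuous as stated (a multiple root can split under perturbation). The correct argument perturbs \emph{from} the arbitrary system \emph{to} a generic one and uses that each isolated torus solution has positive local multiplicity, hence persists as at least one nearby solution of the perturbed system, giving $\#\{\text{isolated torus solutions}\}\le \mvol(P_1,\dots,P_n)$. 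Second, in the isolatedness criterion you should make explicit that a positive-dimensional component of the torus solution set has a nonempty tropicalization of positive dimension containing a nonzero rational point $w$, and that $\init_w$ of the ideal it defines has a torus solution which then solves each $\init_w(g_i)$ because $\langle \init_w(g_1),\dots,\init_w(g_n)\rangle$ is contained in the initial ideal; as sketched, the passage from ``a family of solutions with a well-defined valuation vector'' to ``a solution of the initial system'' is asserted but not justified.
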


By Proposition~\ref{prop:finite_solutions} we know that for a general sparse polynomial system $F$ and data vector $u$, there are finitely many complex solutions to the likelihood equations and that all such complex solutions live in the torus. Therefore, we would like to use Theorem~\ref{thm:BKK} to identify the ML degree of $F$. To do this we need some preliminary results.

By Lemma \ref{lem:newt_likelihood}, 
if $x_1 \cdots x_n \mid f_j$ for all $j \in [k]$
then
\[
\newt(\ell_j) = \newt(\ell_i)
\] for $i,j \in [n]$. Call this polytope $P$. Given some 
nonzero weight vector $w\in\mathbb{Z}^{n+k}$,
we would like to determine which face of $P$ is exposed by $w$, based on which faces of $\newt(f_1),\ldots, \newt(f_k)$ are exposed by $w$.

\begin{lem}\label{lem:min_faces}

Let $F=\langle f_1,\dots,f_k\rangle $ denote a general sparse polynomial system.
Let $\tilde{e}_j \in \mathbb{R}^{n+k}$ be the vector with $(n+j)$-th entry equal to $1$ and all other entries equal to ~$0$. 
Suppose $w$ is a nonzero weight vector in $\mathbb{Z}^{n+k}$.

If  
$x_1 \cdots x_n \mid f_j$ for all $j \in [k]$,
then
up to reordering the $f_1,\ldots, f_k$, $w$ exposes $P$ on one of the following faces:

\begin{enumerate}
    \item\label{case:one} the origin, 
    \item\label{case:two} $ \conv ( \tilde{e}_1 + \newt_w(f_1), \ldots , \tilde{e}_t + \newt_w(f_t))$  for some $t \in [k]$,
    \item\label{case:three} $ \conv( 0, \tilde{e}_1 + \newt_w(f_1), \ldots , \tilde{e}_t + \newt_w(f_t) )$  for some $t \in [k]$.
\end{enumerate}
\end{lem}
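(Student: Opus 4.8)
The plan is to compute $\val_w(P)$ directly from the description of $P$ in Lemma~\ref{lem:newt_likelihood} and then read off the exposed face as the convex hull of those generating vertices on which $w$ attains this minimum. First I would record the elementary observation that multiplying $f_j$ by $\lambda_j$ shifts every exponent vector by $\tilde{e}_j$, so that $\Vertex(\lambda_j f_j) = \tilde{e}_j + \Vertex(f_j)$, and, since translation by a constant vector shifts all inner products by the constant $w_{n+j}$, the exposed face satisfies $\newt(\lambda_j f_j)_w = \tilde{e}_j + \newt_w(f_j)$ with value $w_{n+j} + \val_w(f_j)$ (here $w$ acts on $\newt(f_j) \subseteq \mathbb{R}^n$ through its first $n$ coordinates). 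By Lemma~\ref{lem:newt_likelihood},
\[
P = \conv\bigl(\{0_{n+k}\} \cup (\tilde{e}_1 + \Vertex(f_1)) \cup \dots \cup (\tilde{e}_k + \Vertex(f_k))\bigr),
\]
so setting $m_j := w_{n+j} + \val_w(f_j)$ and using that the origin contributes value $0$, I obtain $\val_w(P) = \min(0, m_1, \dots, m_k)$.

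Next I would invoke the standard fact that the face of a convex hull exposed by $w$ is the convex hull of the exposed faces of its generating pieces, restricted to those pieces on which the global minimum $\val_w(P)$ is attained. Concretely, letting $J = \{j \in [k] : m_j = \val_w(P)\}$, the exposed face $P_w$ is the convex hull of the origin (included exactly when $\val_w(P) = 0$) together with the faces $\tilde{e}_j + \newt_w(f_j)$ for $j \in J$. Since $\val_w(P) = \min(0, m_1, \dots, m_k) \leq 0$ always, the argument splits into three cases according to the sign of $\val_w(P)$ and whether the origin lies in $P_w$. After reordering the $f_j$ so that $J = \{1, \dots, t\}$, the three cases are: (i) $\val_w(P) = 0$ with every $m_j > 0$, where only the origin attains the minimum, giving Case~\eqref{case:one}; (ii) $\val_w(P) < 0$, where the origin is strictly excluded and $P_w = \conv(\tilde{e}_1 + \newt_w(f_1), \dots, \tilde{e}_t + \newt_w(f_t))$, giving Case~\eqref{case:two}; and (iii) $\val_w(P) = 0$ with some $m_j = 0$, where the origin and the minimizing faces both appear, giving Case~\eqref{case:three}.

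I expect the only real subtlety to be justifying cleanly that the exposed face of the convex hull decomposes as the convex hull of the exposed faces of the minimizing generators; this is a routine property of support functions and exposed faces under the convex-hull operation, but it must be stated carefully because the generating sets $\tilde{e}_j + \Vertex(f_j)$ lie in different translates, and one must verify that no point of a non-minimizing piece (one with $m_j > \val_w(P)$) can enter $P_w$. Everything else reduces to the elementary case analysis on the sign of $\val_w(P)$ described above, and the phrase ``up to reordering'' in the statement absorbs the freedom in labeling the indices of $J$.
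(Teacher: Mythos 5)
Your proposal is correct and follows essentially the same route as the paper's proof: both compute $\val_w(P)$ as the minimum of $0$ and the values $w_{n+j}+\val_w(f_j)$ on the translated pieces $\tilde{e}_j+\newt(f_j)$, and then split into the three cases according to whether the minimum is positive, negative, or zero-and-attained-by-some-piece. Your version is in fact slightly more careful than the paper's, since you explicitly isolate and justify the fact that the exposed face of a convex hull of pieces is the convex hull of the exposed faces of the minimizing pieces, which the paper uses implicitly.
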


\begin{proof}
Fix a nonzero weight vector 
$w=(a,b) \in \mathbb{R}^n\times\mathbb{R}^k$ and suppose $(v,0_k) \in \newt(f_1) \setminus\newt_w(f_1)$. 
From the description of $P$ in Lemma~\ref{lem:newt_likelihood} we have that
\[\val_w(P) \in \{0, b_1 + \val_w(f_1),\ldots, b_k + \val_w(f_k) \}. \] 
If $b_j + \val_w(f_j) >0$ for all $j \in [k]$, then $P$ is exposed at the origin, so we are in Case~\ref{case:one}. If $b_1 + \val_w(f_1) = \cdots = b_t + \val_w(f_t)  = \gamma <0$ for some $t \in [k]$, where $b_j + \val_w(f_j) > \gamma$ for all $t+1 \leq j \leq k$, then we are in Case~\ref{case:two}. If above $\gamma = 0$, then we are in Case~\ref{case:three}.
\end{proof}

We illustrate Lemma~\ref{lem:min_faces} with the following example. 
\begin{ex}\label{ex:min_faces}
Recall the ML system $\mathcal{L}(\hat f)$  from Example~\ref{ex:newtFvsFhat} where $\hat f =  \langle  xy(2x^4 + 3y^3 - 5)\rangle $ and  $P$ from  Figure~\ref{fig:newtpoly}. 
Consider the three weight vectors 
\[
w_1 = (-3, 14, 3), \quad
w_2 = (-3, -4, 3), \quad 
w_3 = (-3, 12, 3).
\]
The respective exposed faces of $P$ for these weight vectors are
\[
P_{w_1} = \{(0,0,0)\},\quad 
P_{w_2} = \conv(\{(5, 1, 1), (1, 4, 1)\}), \quad
P_{w_3} = \conv(\{(0,0,0),(5, 1, 1) \}),
\]
and are shown in red in Figure~\ref{fig:min_faces}.
Each $P_{w_i}$ corresponds to one of the three cases in Lemma~\ref{lem:min_faces}. Namely, $P_{w_1}$ is the origin; 
 $P_{w_2}$ is in Case~\ref{case:two}; and 
 $P_{w_3}$ is in Case~\ref{case:three}.

\begin{figure}[htbp]
\centering
\includegraphics[width=5.2cm]{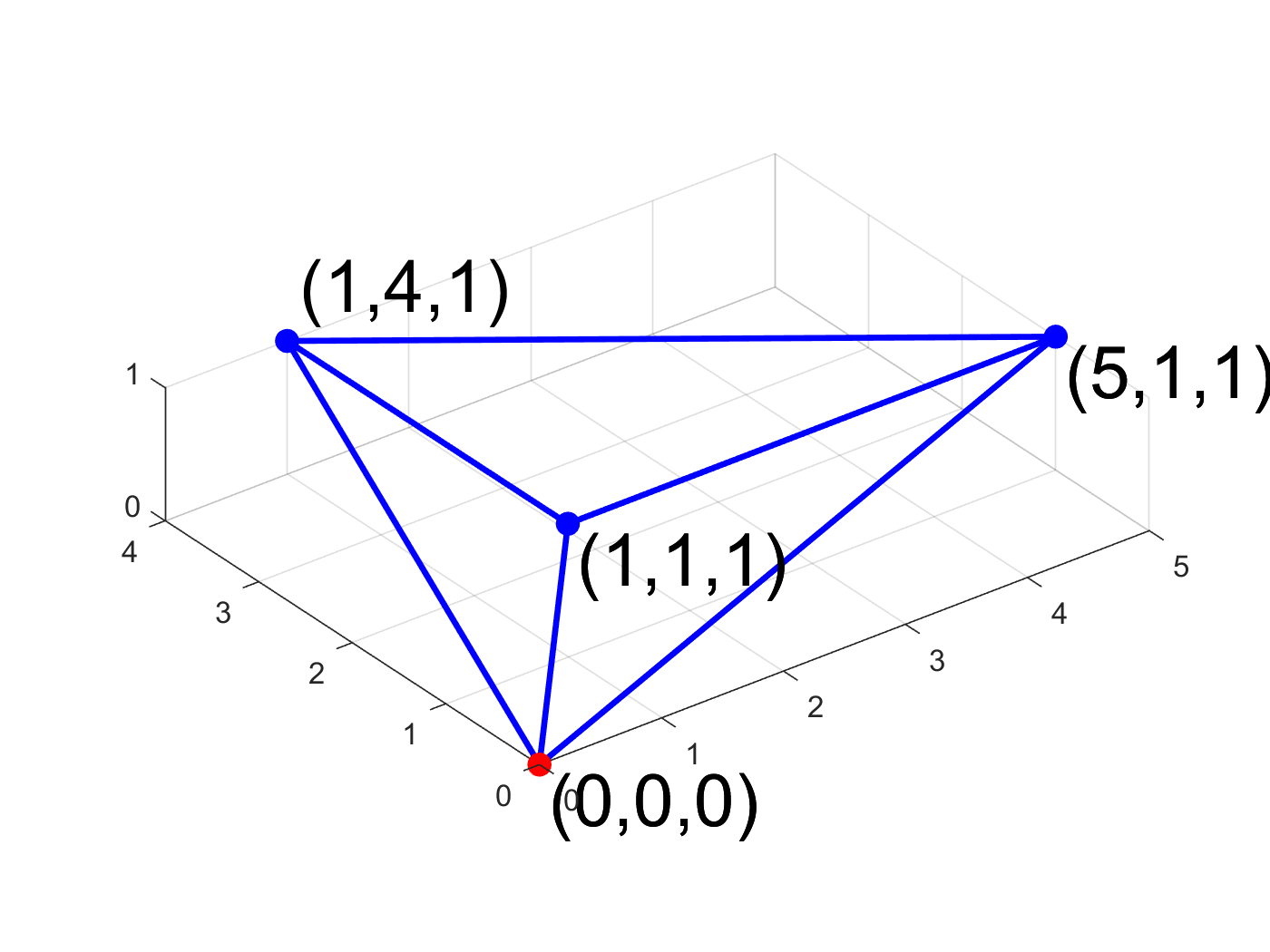}
\includegraphics[width=5.2cm]{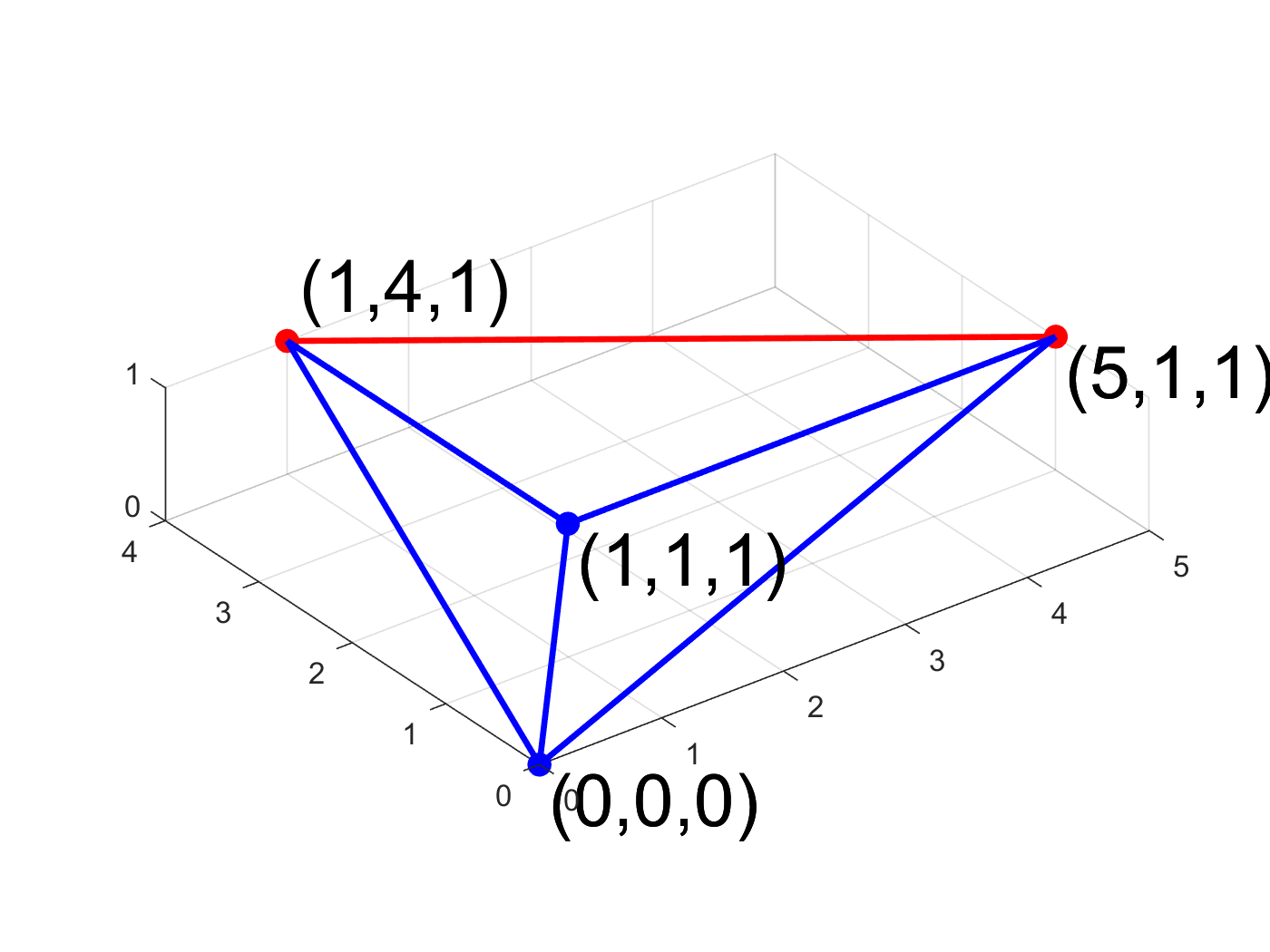}
\includegraphics[width=5.2cm]{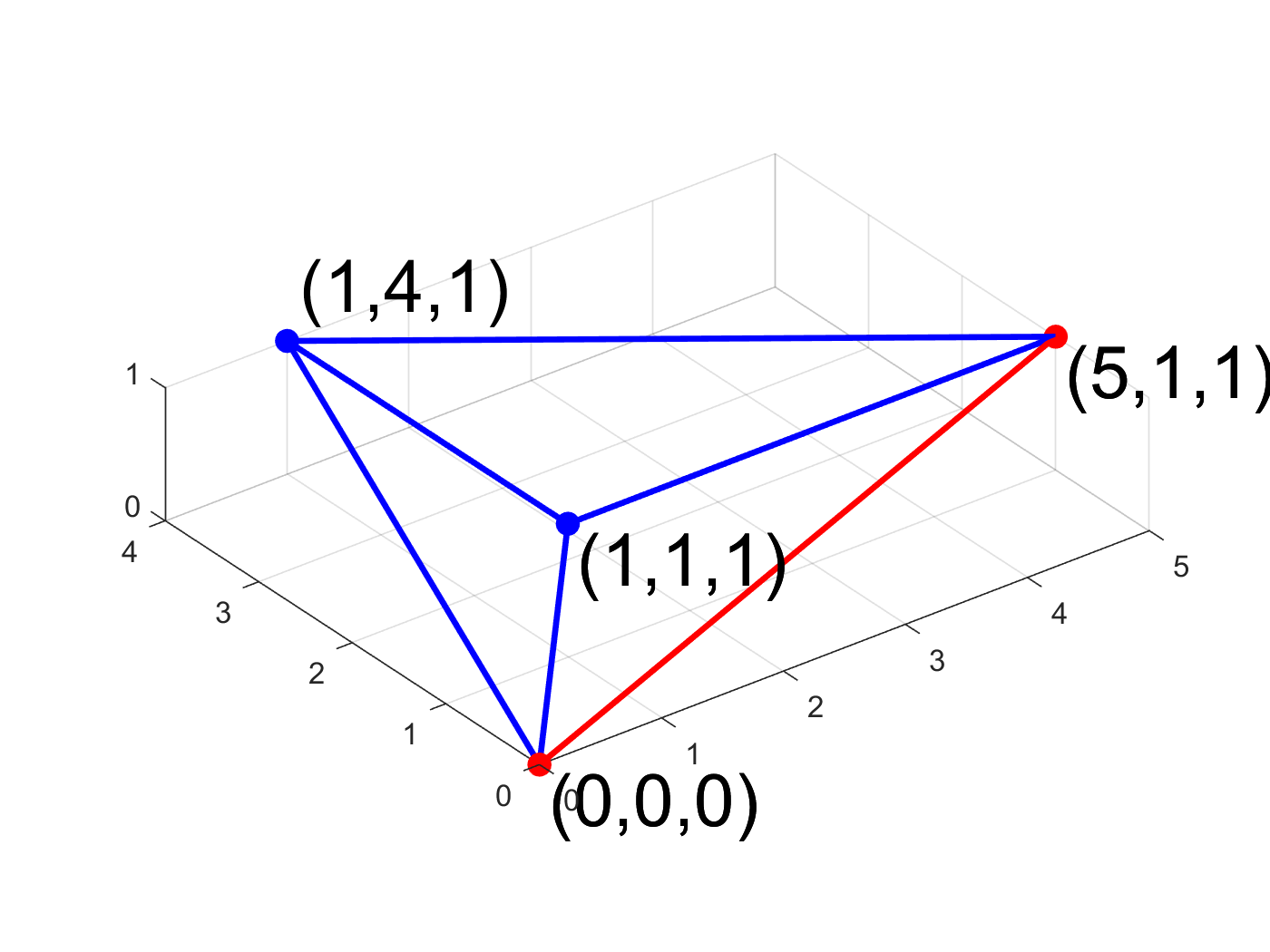}
\caption{$P_{w_1}, P_{w_2}$ and $P_{w_3}$ from Example~\ref{ex:min_faces}.}\label{fig:min_faces}
\end{figure}

\end{ex}

We now need to show that for each of the three cases outlined in Lemma \ref{lem:min_faces}, there are no $\mathbb{C}^*$ solutions to the corresponding initial system.

\begin{lem}\label{lem:case1}
Let $F=\langle f_1,\dots,f_k\rangle $ denote a general sparse polynomial system.
If  
$x_1 \cdots x_n \mid f_j$ for all $j \in [k]$,
then
there are no $\mathbb{C}^*$ solutions to $\init_w(\mathcal{L}(F))$ when $P_w$ is as in Case~\ref{case:one}.
\end{lem}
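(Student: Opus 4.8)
The plan is to observe that in Case~\ref{case:one} the initial form of every $\ell_i$ collapses to its constant term, which is the nonzero datum $u_i$, so the initial system is already inconsistent among the $\ell$-equations and has no solutions whatsoever. The whole argument turns on identifying which monomial of $\ell_i$ sits at the origin of $P$.

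First I would recall from Lemma~\ref{lem:newt_likelihood} that, under the hypothesis $x_1\cdots x_n \mid f_j$, all of $\ell_1,\dots,\ell_n$ share the common Newton polytope
\[
P = \conv\bigl(\{0_{n+k}\}\cup \Vertex(\lambda_1 f_1)\cup\dots\cup\Vertex(\lambda_k f_k)\bigr),
\]
and I would pin down the term of $\ell_i$ lying at the origin. Since
$\ell_i = u_i - x_i\sum_{j=1}^k \lambda_j \frac{\partial}{\partial x_i}(f_j)$,
every monomial other than the constant $u_i$ carries a factor $\lambda_j$, hence has $(n+j)$-th exponent coordinate equal to $1$ and is not at $0_{n+k}$. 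Thus the unique monomial of $\ell_i$ with exponent $0_{n+k}$ is the constant $u_i$.

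Next I would invoke the definition of the initial form. The hypothesis of Case~\ref{case:one} is precisely that $P_w = \{0_{n+k}\}$, i.e.\ $w$ attains its minimum over $P$ only at the origin, so $\newt_w(\ell_i) = \{0_{n+k}\}$ and therefore $\init_w(\ell_i) = u_i$ for every $i \in [n]$. Because $u$ is a general data vector, each $u_i$ is nonzero, so $\init_w(\ell_i) = u_i$ is a nonzero constant; the equation $\init_w(\ell_i)=0$ reads $u_i = 0$ and is unsatisfiable. Consequently the initial system $\init_w(\mathcal{L}(F))$ has no solutions at all, and in particular none in $(\mathbb{C}^*)^{n+k}$.

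I do not expect a genuine obstacle in this case, which is the simplest of the three from Lemma~\ref{lem:min_faces}: the only point requiring care is the bookkeeping that isolates $u_i$ as the origin's monomial, and that follows immediately from the explicit shape of $\ell_i$ in \eqref{lagrangemulti}. The genericity of $u$ (already used in Proposition~\ref{prop:finite_solutions}) is what converts this observation into the nonexistence of $\mathbb{C}^*$ solutions.
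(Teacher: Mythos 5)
Your proposal is correct and follows the same route as the paper's proof: in Case~\ref{case:one} the exposed face of $P$ is the origin, whose unique supporting monomial of $\ell_i$ is the constant $u_i$, so $\init_w(\ell_i)=u_i\neq 0$ for generic data and the initial system is inconsistent. The extra bookkeeping you include (that every non-constant monomial of $\ell_i$ carries a $\lambda_j$ factor and hence cannot sit at $0_{n+k}$) is a correct elaboration of what the paper leaves implicit.
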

\begin{proof}

Recall that in Case~\ref{case:one} in 
Lemma~\ref{lem:min_faces}, $P_w$ is the origin.
In this case we have $\init_w(\ell_i) = u_i = 0$. Since generally $u_i \neq 0$, this initial system has no solutions. 
\end{proof}

\begin{lem}\label{lem:case2}
Let $F=\langle f_1,\dots,f_k\rangle$ denote a general sparse polynomial system.
If  
$x_1 \cdots x_n \mid f_j$ for all $j \in [k]$,
then
there are no $\mathbb{C}^*$ solutions to 
$\init_w(\mathcal{L}(F))$ when $P_w$ is as in Case~\ref{case:two}.
\end{lem}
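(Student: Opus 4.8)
The plan is to write down $\init_w(\mathcal{L}(F))$ explicitly in Case~\ref{case:two} and show that, after removing a symmetry forced by the Newton polytopes, it is an \emph{overdetermined} system on the torus and hence has no solutions for general $F$. Write $w=(a,b)\in\mathbb{R}^n\times\mathbb{R}^k$ and set $g_j:=\init_w(f_j)=\init_a(f_j)$. Because the exposed face $P_w$ sits at value $\gamma<0$, the vertex $0_{n+k}$ carrying the term $u_i$ of $\ell_i$ is \emph{not} on $P_w$; hence by Lemma~\ref{lem:newt_likelihood} together with the description of $P_w$ in Lemma~\ref{lem:min_faces}, the $u_i$ term drops out and $\init_w(\ell_i)=-x_i\sum_{j=1}^{t}\lambda_j\frac{\partial}{\partial x_i}g_j$ for $i\in[n]$, alongside the constraints $g_1=\dots=g_k=0$. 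Since $\lambda_{t+1},\dots,\lambda_k$ appear in no equation, a point of $(\mathbb{C}^*)^{n+k}$ solves the initial system if and only if there exist $x\in(\mathbb{C}^*)^n$ and $(\lambda_1,\dots,\lambda_t)\in(\mathbb{C}^*)^t$ with $g_j(x)=0$ for all $j\in[k]$ and $\nabla_x\big(\sum_{j=1}^t\lambda_j g_j\big)(x)=0$. This is exactly the ML system of the general sparse system $\langle g_1,\dots,g_k\rangle$ with the data vector set to $0$.

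The structural input I would exploit is a generalized Euler relation: since each $g_j$ is supported on the face $\newt_a(f_j)$, on which $\langle a,\alpha\rangle$ equals the constant $\val_a(f_j)$, one has $\sum_{i=1}^n a_i x_i\frac{\partial}{\partial x_i}g_j=\val_a(f_j)\,g_j$. Let $W$ be the subspace of directions $v$ along which every $\mathrm{supp}(g_j)$ is constant; it contains $a$, so $\dim W=:d\ge 1$ when $a\neq 0$. Applying a monomial (toric) change of coordinates that carries $W$ to coordinate directions, each $g_j$ factors on the torus as a monomial in $y_1,\dots,y_d$ times a Laurent polynomial $h_j(y_{d+1},\dots,y_n)$, so that $g_j=0\iff h_j=0$. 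Using the $\SL$-invariance feeding Theorem~\ref{thm:BKK} together with the fact that initial forms of general polynomials are again general for their supports, the problem reduces to the zero-data ML system of a general sparse system $H=\langle h_1,\dots,h_k\rangle$ in the $n-d$ variables $z=(y_{d+1},\dots,y_n)$, where now the $h_j$ share \emph{no} common face.

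Now I would count dimensions. The reduced system has the $k$ constraints $h_j(z)=0$ and the $n-d$ gradient equations $\frac{\partial}{\partial z_m}\big(\sum_{j=1}^t\tilde\lambda_j h_j\big)=0$; as the multipliers enter homogeneously, the unknowns are $z\in(\mathbb{C}^*)^{n-d}$ and $[\tilde\lambda]\in\mathbb{P}^{t-1}$, for a total of $(n-d)+(t-1)$. Having stripped every common-face direction, no residual Euler relation links a gradient equation to the constraints, so the $k+(n-d)$ equations overdetermine the system by $k-t+1\ge 1$ (using $t\le k$). To conclude I would form the incidence variety of triples (coefficients of $H$, $z$, $[\tilde\lambda]$) satisfying all equations and project to the coefficient space: a fiber-dimension estimate bounds the image by $\dim H-(k-t+1)<\dim H$, so the projection is not dominant and the solution set for general $F$ is empty. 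The boundary case $a=0$ needs no reduction ($d=0$), and the same count applies directly to $\langle f_1,\dots,f_k\rangle$.

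The hard part will be the genericity step in the last paragraph, namely showing the incidence variety has the expected dimension — equivalently that the $k+(n-d)$ conditions the equations impose on the coefficients of $H$ are linearly independent for generic $(z,[\tilde\lambda])$. This is precisely where the removal of all common-face directions is essential, since the only dependency among a gradient row and the constraints is the Euler relation that the reduction eliminates. I also expect to need care with degenerate supports (where $\dim H$ is too small for the conditions to be independent) and with confirming that the coefficients of the $h_j$ remain general after the monomial substitution.
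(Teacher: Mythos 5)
Your reduction of $\init_w(\mathcal{L}(F))$ in Case~\ref{case:two} to the torus system $g_1=\dots=g_k=0$, $\sum_{j=1}^t\lambda_j\nabla g_j=0$ is correct, but the dimension count you build on it has a concrete gap. You quotient by the subspace $W$ of directions along which \emph{every} $\mathrm{supp}(g_j)$, $j\in[k]$, is constant, and then assert that no residual Euler-type relation links the gradient rows to the constraint rows. That fails when $t<k$: a linear dependence among the $k+(n-d)$ conditions on the coefficients is produced by any $\nu$ that is constant on $\mathrm{supp}(h_j)$ for $j\in[t]$ \emph{only} (writing the dependence with multipliers $\mu_j$ on the constraints and $\nu_m$ on the logarithmic gradient rows, the coefficient of $c_{j,\alpha}$ is $z^\alpha\bigl(\mu_j+\tilde\lambda_j\langle\nu,\alpha\rangle\bigr)$, and one is only forced to $\mu_j=0$ for $j>t$). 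Such a $\nu$ need not lie in your $W$, so after your reduction the rank of the conditions can fall short of $k+(n-d)$ by more than your excess $k-t+1$, and the fiber-dimension estimate no longer shows non-dominance. The repair is to discard $g_{t+1},\dots,g_k$ (their vanishing imposes independent conditions on disjoint coefficient blocks) and quotient only by the common constant directions of $\mathrm{supp}(g_1),\dots,\mathrm{supp}(g_t)$; then the excess is exactly $1$, and the displayed computation shows the conditions are independent at \emph{every} torus point --- which you also need, since independence only for generic $(z,[\tilde\lambda])$ does not bound the dimension of the incidence variety without stratifying over the jump locus. Relatedly, $a=0$ does not force $d=0$ (homogeneous $f_j$ already have supports on a hyperplane), so the boundary case is not exempt from the reduction.

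For comparison, the paper's proof is much shorter and avoids all of this: it keeps only the subsystem $\tilde f_1=\dots=\tilde f_t=0$, $\sum_{j=1}^t\lambda_j\frac{\partial}{\partial x_i}\tilde f_j=0$, divides by $x_i\neq 0$, and observes that a solution in $(\CC^*)^{n+k}$ has all $\lambda_j\neq 0$ and hence exhibits a nontrivial linear dependence among $\nabla\tilde f_1,\dots,\nabla\tilde f_t$ at a point of $\mathcal{V}(\tilde f_1,\dots,\tilde f_t)\cap(\CC^*)^n$, i.e.\ a singular point; Bertini's theorem for the general sparse system with supports on the faces $\newt_w(f_j)$ says this intersection is a smooth complete intersection of codimension $t$ (or empty), so no such point exists. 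Your incidence-variety argument is, in effect, a hands-on proof of exactly that generic-smoothness statement; once the issues above are fixed it can be made to work, but it re-derives Bertini rather than citing it.
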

\begin{proof}
Recall that in Case~\ref{case:two} in 
Lemma~\ref{lem:min_faces}, $P_w$ is 
$ \conv ( \tilde{e}_1 + \newt_w(f_1), \ldots , \tilde{e}_t + \newt_w(f_t))$  for some $t\in [k]$.

Let $\tilde{f}_j = \init_w(f_j)$ for $j \in [k]$. We consider the following as a subsystem of $\init_w( \mathcal{L}(F))$:
\begin{align*}
    \tilde{f_1} &= \ldots = \tilde{f_t} = 0, \\
    x_1 \big( \sum_{j=1}^t \lambda_j \frac{\partial}{\partial x_1} (\tilde{f_j})  \Big)&= \ldots = x_n \big( \sum_{j=1}^t \lambda_j \frac{\partial}{\partial x_n} (\tilde{f}_j) \Big) = 0.
\end{align*}

Since we only consider $\mathbb{C}^*$ solutions, this reduces to 
\begin{align*}
    \tilde{f_1} &= \ldots = \tilde{f_t} = 0, \\
    \sum_{j=1}^t \lambda_j \frac{\partial}{\partial x_1} (\tilde{f_j}) &= \ldots =  \sum_{j=1}^t \lambda_j \frac{\partial}{\partial x_n} (\tilde{f_j})  = 0.
\end{align*}

By Bertini's Theorem~\cite[Ch. III,\S 10.9.2]{MR0463157},
 the variety cut out by $\tilde{f}_1 = 0, \ldots , \tilde{f}_t = 0$ has codimension $t$ in $(\mathbb{C}^*)^n$ and $\mathcal{V}(\tilde{f}_1,\ldots, \tilde{f}_t)$ has no singular solutions in the torus. 
So this initial system has no $\mathbb{C}^*$ solutions. 

\end{proof}

Before we consider the final case of Lemma~\ref{lem:min_faces}, we need a preliminary lemma.

\begin{lem}\label{lem:w_tilde_0}
Let $F=\langle f_1,\dots,f_k\rangle$ denote a general sparse polynomial system where  
$x_1 \cdots x_n \mid f_j$ for all $j \in [k]$.
Furthermore, let $w = (a,b) \in \mathbb{Z}^n \times \mathbb{Z}^k$ be a nonzero weight vector. If $a = 0 $ then there are no $\mathbb{C}^*$ solutions to
$\init_w(\mathcal{L}(F))$.

\end{lem}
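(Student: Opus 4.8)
The plan is to exploit the hypothesis $a=0$ to compute the initial forms explicitly, and then reduce each resulting case either to the already-established Lemmas~\ref{lem:case1} and~\ref{lem:case2} or to Proposition~\ref{prop:finite_solutions}. First I would observe that, because $w=(0,b)$ vanishes on the $x$-coordinates and each $f_j$ involves no $\lambda$-variables, the weight $w$ is constant (equal to $0$) on all of $\newt(f_j)$; hence $\init_w(f_j)=f_j$ for every $j\in[k]$. Turning to $\ell_i=u_i-x_i\sum_{j=1}^k\lambda_j\frac{\partial}{\partial x_i}(f_j)$, the monomial $u_i$ has $w$-weight $0$, while every monomial in the $j$-th summand carries $\lambda_j$ to the first power and so has $w$-weight $b_j$ (the $x$-part contributing nothing since $a=0$). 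Because $x_1\cdots x_n\mid f_j$, Lemma~\ref{lem:newt_likelihood} guarantees $x_i\frac{\partial}{\partial x_i}(f_j)\neq 0$, so each summand genuinely occurs and $\val_w(\ell_i)=\gamma:=\min(0,b_1,\dots,b_k)$ for every $i$.

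Next I would split on the sign of $\gamma$. If $\gamma<0$, the constant term drops out and, writing $T=\{j:b_j=\gamma\}$, the exposed face is $P_w=\conv(\tilde e_j+\newt_w(f_j):j\in T)$, which is exactly Case~\ref{case:two} of Lemma~\ref{lem:min_faces}; Lemma~\ref{lem:case2} then furnishes no $\CC^*$ solutions. If $\gamma=0$ and every $b_j>0$, then $\init_w(\ell_i)=u_i$, which is Case~\ref{case:one}, handled by Lemma~\ref{lem:case1}. The only remaining situation, and the genuine new content of the lemma, is $\gamma=0$ with $S:=\{j:b_j=0\}$ nonempty; note $S\neq[k]$, since $w\neq 0$ forces $b\neq 0$.

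In this remaining case I would recognise $\init_w(\mathcal{L}(F))$ as the ML system of a subsystem together with redundant equations. Indeed $\init_w(\ell_i)=u_i-x_i\sum_{j\in S}\lambda_j\frac{\partial}{\partial x_i}(f_j)$ is precisely the $i$-th likelihood equation of the general sparse system $F_S:=\langle f_j:j\in S\rangle$, so $\init_w(\mathcal{L}(F))$ consists of the full ML system $\mathcal{L}(F_S)$ together with the extra equations $f_j=0$ for $j\notin S$, while the variables $\lambda_j$ with $j\notin S$ do not appear at all. Any $\CC^*$ solution therefore restricts to a torus solution of $\mathcal{L}(F_S)$, and by Proposition~\ref{prop:finite_solutions} there are only finitely many such, hence only finitely many possible $x$-coordinates. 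Choosing any $j_0\in[k]\setminus S$, the coefficients of $f_{j_0}$ are general and independent of those defining $F_S$, so $f_{j_0}$ cannot vanish at any point of this finite list of torus points; thus the requirement $f_{j_0}(x)=0$ is never met and there are no $\CC^*$ solutions. This is the same genericity argument used at the end of the proof of Proposition~\ref{prop:finite_solutions}.

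The step I expect to be the main obstacle is this last case: it is the only one not already covered by Lemmas~\ref{lem:case1} and~\ref{lem:case2}, and it requires care that the finite solution set of $\mathcal{L}(F_S)$ is pinned down \emph{before} the generic polynomial $f_{j_0}$ is chosen, so that the vanishing of $f_{j_0}$ on that set is a genuinely nongeneric event. A minor preliminary point that must not be skipped is the verification that every $\lambda_j$-summand of $\ell_i$ is nonzero, guaranteed by $x_1\cdots x_n\mid f_j$ through Lemma~\ref{lem:newt_likelihood}; this is what makes the weight computation $\val_w(\ell_i)=\gamma$ valid and the three-way case division exhaustive.
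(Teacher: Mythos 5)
Your proposal is correct and follows essentially the same route as the paper: reduce the cases $\gamma<0$ and $\gamma=0$ with all $b_j>0$ to Lemmas~\ref{lem:case2} and~\ref{lem:case1}, and in the remaining case identify $\init_w(\mathcal{L}(F))$ with the ML system of the subfamily $\{f_j : b_j=0\}$ augmented by the generic equations $f_j=0$ for the other indices, which cannot vanish on the finite solution set guaranteed by Proposition~\ref{prop:finite_solutions}. Your write-up is somewhat more explicit than the paper's (in computing $\val_w(\ell_i)$ and in noting that each $\lambda_j$-summand of $\ell_i$ is nonzero), but the argument is the same.
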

\begin{proof}
Under the assumption $a = 0$, 
\[
\newt_w(f_j) = \newt(f_j) \text{ and } \val_w(f_j) = 0\text{ for all } j \in [k].
\]
Recall from the proof of Lemma~\ref{lem:min_faces},
\[ \val_w(P) \in \{0, b_1 + \val_w(f_1), \ldots, b_k + \val_w(f_k) \}. \]
Since $\val_w(f_j) = 0$, this gives that $\val_w(P) \in \{0, b_1,\ldots, b_k \}$. If any $b_j<0$ or all $b_j>0$ for $j \in [k]$, then by Lemmas~\ref{lem:case2} and \ref{lem:case1} there are no $\mathbb{C}^*$ solutions to the initial system. 

It remains to consider when 
\[b_1 = \ldots = b_t = 0 \ \text{ and } \ b_{t+1},\ldots, b_k>0. \] 
Note that $t <k$, because otherwise $b$ would be the all zeros vector, which is not allowed. Observe that $\init_w( \ell_1,\ldots, \ell_n, f_1,\ldots, f_t)$ is equal to the ML system of $(f_1,\ldots, f_t)$. By Proposition~\ref{prop:finite_solutions} there are finitely many solutions to this ML system. Since $f_{t+1},\ldots, f_{k}$ are general, 
their respective hypersurfaces do not intersect the variety of this Lagrange system.
\end{proof}

\begin{lem}\label{lem:case3}
Let $F=\langle f_1,\dots,f_k\rangle$ denote a general sparse polynomial system.
If  
$x_1 \cdots x_n \mid f_j$ for all $j \in [k]$,
then
there are no $\mathbb{C}^*$ solutions to $\init_w(\mathcal{L}(F))$ when $P_w$ is as in Case~\ref{case:three}.
\end{lem}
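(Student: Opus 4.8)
The plan is to reduce to the case $a\neq 0$ and then exploit a scaling symmetry of the initial system that is special to Case~\ref{case:three}. Write $w=(a,b)\in\mathbb{Z}^n\times\mathbb{Z}^k$. If $a=0$, then Lemma~\ref{lem:w_tilde_0} already gives the conclusion, so I would assume $a\neq 0$ for the rest of the argument. Setting $\tilde f_j=\init_w(f_j)$ and using the description of $P_w$ in Case~\ref{case:three} together with Lemma~\ref{lem:newt_likelihood}, I would first record the shape of the initial system. Since $P_w$ contains the origin and the faces $\tilde e_j+\newt_w(f_j)$ for $j\in[t]$, a monomial $\lambda_j x^\alpha$ appearing in $\ell_i$ survives in $\init_w(\ell_i)$ exactly when $j\le t$ and $\alpha\in\newt_w(f_j)$; hence $\init_w(\ell_i)=u_i-x_i\sum_{j=1}^t\lambda_j\frac{\partial}{\partial x_i}\tilde f_j$ for each $i\in[n]$, while $\init_w(f_j)=\tilde f_j$ for $j\in[k]$. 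In particular the $n+t$ equations $\init_w(\ell_1),\dots,\init_w(\ell_n),\tilde f_1,\dots,\tilde f_t$ are precisely the ML system of the sparse system $(\tilde f_1,\dots,\tilde f_t)$ in the variables $x_1,\dots,x_n,\lambda_1,\dots,\lambda_t$.

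Next I would invoke genericity and finiteness. The coefficients of each $\tilde f_j$ form a subset of the general coefficients of $f_j$, so $(\tilde f_1,\dots,\tilde f_t)$ is itself a general sparse polynomial system; by Proposition~\ref{prop:finite_solutions} its ML system has finitely many solutions, all lying in $(\CC^*)^{n+t}$. The key observation is that in Case~\ref{case:three} we have $b_j+\val_w(f_j)=0$ for every $j\in[t]$. Because $\tilde f_j$ is the $w$-initial form, it is quasi-homogeneous: $\tilde f_j(s^a\cdot x)=s^{\val_w(f_j)}\tilde f_j(x)$, where $s^a\cdot x=(s^{a_1}x_1,\dots,s^{a_n}x_n)$. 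A short computation then shows $x_i\frac{\partial}{\partial x_i}\tilde f_j(s^a\cdot x)=s^{\val_w(f_j)}\,x_i\frac{\partial}{\partial x_i}\tilde f_j(x)$, so under the $\CC^*$-action $\sigma_s\colon (x,\lambda_1,\dots,\lambda_t)\mapsto(s^a\cdot x,\,s^{b_1}\lambda_1,\dots,s^{b_t}\lambda_t)$ every term $\lambda_j x_i\frac{\partial}{\partial x_i}\tilde f_j$ is multiplied by $s^{b_j+\val_w(f_j)}=1$. Thus each $\init_w(\ell_i)$ and each $\tilde f_j$ is invariant under $\sigma_s$, i.e. $\sigma_s$ preserves the solution set of this sub--ML system.

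To finish, I would argue by contradiction. Suppose $(x^*,\lambda_1^*,\dots,\lambda_t^*)\in(\CC^*)^{n+t}$ is a solution of the sub--ML system. Since $a\neq 0$, the orbit $\{\sigma_s(x^*,\lambda_1^*,\dots,\lambda_t^*):s\in\CC^*\}$ is the image of a nonconstant morphism $\CC^*\to(\CC^*)^{n+t}$ and is therefore infinite; by $\sigma_s$-invariance it consists of solutions, contradicting the finiteness from Proposition~\ref{prop:finite_solutions}. Hence the sub--ML system has no torus solutions. Finally, any solution of the full initial system $\init_w(\mathcal L(F))$ in $(\CC^*)^{n+k}$ restricts, upon forgetting $\lambda_{t+1},\dots,\lambda_k$, to a torus solution of the sub--ML system, because the variables $\lambda_{t+1},\dots,\lambda_k$ do not appear in any $\init_w(\ell_i)$; since there are none, $\init_w(\mathcal L(F))$ has no $\CC^*$ solutions.

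The step I expect to be the main obstacle is the subcase $t=k$. Here, in contrast with the proof of Lemma~\ref{lem:w_tilde_0}, there are no leftover general polynomials $f_{t+1},\dots,f_k$ whose hypersurfaces could be used to avoid a finite solution set, and Proposition~\ref{prop:finite_solutions} by itself only yields finiteness rather than emptiness. The quasi-homogeneity / $\CC^*$-action argument is exactly what upgrades ``finitely many'' to ``none'', and the one genuinely computational point to get right is the invariance of $\init_w(\ell_i)$, which hinges on the Case~\ref{case:three} identity $b_j+\val_w(f_j)=0$.
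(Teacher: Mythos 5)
Your proof is correct, but it reaches the conclusion by a genuinely different mechanism than the paper. The paper also works with the subsystem in $x,\lambda_1,\dots,\lambda_t$, but it multiplies each $\tilde f_j$ by $\lambda_j$, rewrites the initial system as a linear system $AX=U$ in the (scaled) monomials, and exhibits the explicit left-kernel vector $(a_1,\dots,a_n,-\val_w(f_1),\dots,-\val_w(f_t))$ of $A$; pairing this vector with $U$ forces $\langle a,u\rangle=0$, contradicting genericity of $u$ once the case $a=0$ is dispatched by Lemma~\ref{lem:w_tilde_0}. You instead observe that the subsystem is exactly the ML system of the general sparse system $(\tilde f_1,\dots,\tilde f_t)$, invoke Proposition~\ref{prop:finite_solutions} for finiteness, and then use the one-parameter torus action $\sigma_s$ with weight $(a,b_1,\dots,b_t)$ --- which is precisely the exponentiated version of the paper's kernel vector, with invariance coming from the same Case~\ref{case:three} identity $b_j+\val_w(f_j)=0$ --- to show that a single torus solution would generate an infinite orbit of solutions. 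Both arguments reduce the case $a=0$ to Lemma~\ref{lem:w_tilde_0} and both ultimately rest on genericity of $u$ (yours indirectly, through Proposition~\ref{prop:finite_solutions}). What your route buys is a more conceptual explanation --- the initial system inherits a positive-dimensional symmetry, so it cannot have isolated torus solutions --- and it correctly isolates $t=k$ as the subcase where finiteness alone does not suffice; what it costs is the extra bookkeeping that $(\tilde f_1,\dots,\tilde f_t)$ is again general and that $u$ is generic for each of the finitely many initial systems, which is the same implicit bookkeeping the paper performs in Lemma~\ref{lem:w_tilde_0}. The paper's linear-algebra version avoids re-invoking Proposition~\ref{prop:finite_solutions} and yields the contradiction $\langle a,u\rangle=0$ directly.
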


\begin{proof}
Recall from  Case~\ref{case:three} in 
Lemma~\ref{lem:min_faces}, $P_w$ is 
$ \conv( 0, \tilde{e}_1 + \newt_w(f_1), \ldots , \tilde{e}_t + \newt_w(f_t) )$  for some $t\in [k]$.
Let $\tilde{f}_j = \init_w(f_j)$ for $j \in [k]$.

We consider the subsystem of $\init_w(\mathcal{L}(F))$ given by: 
\begin{align*}
    \tilde{f_1} = \ldots = \tilde{f_t} &= 0 \\
    x_1\sum_{j=1}^t \lambda_j \frac{\partial}{\partial x_1} (\tilde{f_j}) &= u_1 \\
    \vdots \qquad & \\
    x_n\sum_{j=1}^t \lambda_j \frac{\partial}{\partial x_n} (\tilde{f_j})  &= u_n.
\end{align*}

Multiplying $\tilde{f}_j$ by $\lambda_j$, this becomes

\begin{align*}
    \lambda_1 \tilde{f_1} = \ldots = \lambda_t \tilde{f_t} &= 0 \\
    x_1\sum_{j=1}^t \lambda_j \frac{\partial}{\partial x_1} (\tilde{f_j}) &= u_1 \\
    \vdots \qquad & \\
    x_n\sum_{j=1}^t \lambda_j \frac{\partial}{\partial x_n} (\tilde{f_j})  &= u_n.
\end{align*}

Observe that $\lambda_j \tilde{f}_j$ has the same monomial support as $x_i \lambda_j \frac{\partial}{\partial x_i} (\tilde{f}_j)$ for all $i \in [n]$. Therefore if we write $\lambda_j \tilde{f}_j = \sum_{i=1}^{M_j} c_{i,j} x^{\alpha_{i,j}}$ we can write $\init_w( \ell_1,\ldots, \ell_n,f_1,\ldots, f_t)$ as the linear system $AX = U$:
\begin{align*}
    \begin{bmatrix}
    \vline & & \vline  & & \vline & &\vline & &  \\
    \alpha_{1,1}  & \hdots & \alpha_{1,M_1}  & \hdots & \alpha_{t,1} & \hdots & \alpha_{t,M_t} \\
     \vline &  &  \vline   & &\vline & & \vline \\
   -  & \mathbf{1}_{M_1} & - &  & 0  & \cdots & 0   \\
    & \vdots &   & & \vdots & &\vdots \\
    0 & \cdots & 0 &  \cdots   & - & \mathbf{1}_{M_t} & -
    \end{bmatrix} \cdot \begin{bmatrix} c_{1,1} x^{\alpha_{1,1}} \\ \vdots \\ c_{1,M_1} x^{\alpha, M_1} \\ \vdots \\ c_{t,1} x^{\alpha_t,1} \\ \vdots \\ c_{t,M_t} x^{\alpha_{t, M_t}} \end{bmatrix} &= \begin{bmatrix} u_1 \\ \vdots \\ u_n \\ 0 \\ \vdots \\ 0 \end{bmatrix}.
\end{align*}

Note that $A \in \mathbb{N}^{(n+t) \times (M_1+\ldots + M_t)}$, $X \in \mathbb{R}^{M_1+\ldots + M_t}$,  
$U \in \mathbb{N}^{n+t}$, and 
$\mathbf{1}_{M_i}$ is a row vector of size $M_i$ of all ones.

For $M_1,\dots,M_t$ large enough, a dimension count of $A$ suggests its rows are linearly independent. However, it turns out that no matter the size of $M_1,\dots, M_t$, 
the matrix $A$ always has a nontrivial left kernel vector:
\[
    (a_1,\ldots, a_n, -\val_w(f_1),\ldots, - \val_w(f_t)),
\]
where $w = (a,b) \in \mathbb{Z}^n \times \mathbb{Z}^k$. 
This follows from
$\langle a, \alpha_{j,i}\rangle =\val_w(f_j) $
for $j\in[t]$.

By Lemma~\ref{lem:w_tilde_0}, we know that some of the $a_i$ are nonzero, which contradicts the generality of $u$, 
as it would imply that $\langle a, u \rangle = 0$ with $a \neq 0$.

\end{proof}

\subsection{Main result and consequences}

\begin{theorem}[Main Result]\label{theorem:main}
For general sparse polynomials $F= \langle f_1,\dots,f_k\rangle$, 
denote its
 ML system $\eqref{eq:MLsystem}$ by %
$\mathcal{L}(F)$.
The ML degree of $F$ equals the mixed volume of $\mathcal{L}(F)$. 
\end{theorem}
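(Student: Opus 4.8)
The plan is to realize the ML degree as a count of points in the algebraic torus and then extract that count from the Newton polytopes through the BKK bound. First I would invoke Proposition~\ref{prop:finite_solutions}: for general $F$ and generic $u$ the square system $\mathcal{L}(F)=\langle \ell_1,\dots,\ell_n,f_1,\dots,f_k\rangle$ of $n+k$ equations in $n+k$ unknowns has finitely many solutions, all lying in $(\CC^*)^{n+k}$. Hence the ML degree is exactly the number of $\CC^*$-solutions of $\mathcal{L}(F)$. Using Proposition~\ref{prop:Fhat_equals_F} I would then reduce to the case $x_1\cdots x_n\mid f_j$ for every $j\in[k]$, since passing from $F$ to $\hat F$ preserves the ML degree while placing the Newton polytopes $\newt(\ell_1),\dots,\newt(\ell_n)$ into the common form $P$ of Lemma~\ref{lem:newt_likelihood}, on which the face analysis below is built.

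The crux is that $\mathcal{L}(F)$ is \emph{not} a generic sparse system: the $\ell_i$ are assembled from the partials of the same polynomials $f_j$ and share the multipliers $\lambda_j$, so the first assertion of Theorem~\ref{thm:BKK} does not apply off the shelf and a priori only gives the inequality (number of torus roots) $\le \mvol(\mathcal{L}(F))$. To promote this to an equality I would verify Bernstein's genericity condition directly, namely that for every nonzero $w\in\mathbb{Z}^{n+k}$ the initial system $\init_w(\mathcal{L}(F))$ has no solution in $(\CC^*)^{n+k}$. This is precisely what the preceding lemmas supply: Lemma~\ref{lem:min_faces} shows every nonzero $w$ exposes $P$ in one of three exhaustive cases, and Lemmas~\ref{lem:case1},~\ref{lem:case2}, and~\ref{lem:case3} show that in each case the corresponding initial system is free of torus solutions.

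With the genericity condition established, Theorem~\ref{thm:BKK} guarantees that all roots of $\mathcal{L}(F)$ are isolated and that the mixed-volume upper bound on the number of torus solutions is attained; hence the number of $\CC^*$-solutions of $\mathcal{L}(F)$ equals $\mvol(\newt(\ell_1),\dots,\newt(\ell_n),\newt(f_1),\dots,\newt(f_k))$, the mixed volume of $\mathcal{L}(F)$. Combined with the first paragraph, this yields the ML degree of $F$ as the mixed volume of $\mathcal{L}(F)$.

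I expect essentially all the difficulty to be front-loaded into the case analysis of Lemmas~\ref{lem:min_faces}--\ref{lem:case3}, which have already been carried out; at the level of the theorem the single genuine obstacle is the equality-versus-inequality gap in BKK. For the non-generic system $\mathcal{L}(F)$ the mixed volume is only an upper bound, and it is the vanishing of torus solutions of every initial system---not merely finiteness or isolatedness of the roots---that closes the gap. A secondary bookkeeping point is to confirm that the reduction to $\hat F$ is consistent with the mixed-volume side, i.e.\ that the normalization $f_j\mapsto x_1\cdots x_n f_j$ does not alter the quantity $\mvol(\mathcal{L}(F))$ being asserted.
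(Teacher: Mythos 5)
Your first two paragraphs track the paper's argument for the normalized system: Proposition~\ref{prop:finite_solutions} places all solutions in the torus, Lemmas~\ref{lem:min_faces}--\ref{lem:case3} verify Bernstein's non-degeneracy condition for every nonzero $w$, and Theorem~\ref{thm:BKK} then gives the ML degree of $\hat F$ as $\mvol(\mathcal{L}(\hat F))$. You also correctly flag the one conceptual danger, namely that $\mathcal{L}(F)$ is not a generic sparse system, so the equality clause of BKK cannot be applied off the shelf. Up to that point the proposal is sound and matches the paper.

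The gap is the step you defer as ``secondary bookkeeping'': showing that the normalization $f_j\mapsto x_1\cdots x_n f_j$ does not change the mixed volume, i.e.\ that $\mvol(\mathcal{L}(F)) = \mvol(\mathcal{L}(\hat F))$. This is not bookkeeping. As Example~\ref{ex:newtFvsFhat} shows, the Newton polytopes of $\ell_i$ and $\hat\ell_i$ are genuinely different (e.g.\ $\newt(\ell_1)$ is a segment while $\newt(\hat\ell_1)$ is two-dimensional), and there is no unimodular map carrying one tuple of polytopes onto the other: the paper's map $\phi\in\SL_{n+k}$ only yields a containment $\phi(\newt(\ell_i))\subseteq\newt(\hat\ell_i)$, which is typically strict. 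The paper closes the gap with a sandwich argument: translation invariance, the $\SL_{n+k}$ action, and monotonicity give $\mvol(\mathcal{L}(F))\le\mvol(\mathcal{L}(\hat F))$, while the upper-bound clause of Theorem~\ref{thm:BKK} applied to $\mathcal{L}(F)$ itself gives that the ML degree of $F$ is at most $\mvol(\mathcal{L}(F))$; combined with $\mvol(\mathcal{L}(\hat F))$ equalling the ML degree of $\hat F$, hence of $F$, this forces all four quantities to coincide. Without some such argument your proof only establishes the theorem for systems satisfying $x_1\cdots x_n\mid f_j$, not for a general sparse $F$.
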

\begin{proof}
For readability, we abbreviate $\mvol(\newt(g_1),\dots,\newt(g_n))$ by  $\mvol(g_1,\dots,g_n)$ throughout the proof.
Consider the system $\hat{F} = (\hat{f}_1,\ldots, \hat{f}_k)$ where $\hat{f}_j= x_1\cdots x_n \cdot f_j$ for $j \in~[k]$. It follows from Theorem~\ref{thm:BKK}, Proposition~\ref{prop:finite_solutions} and Lemmas~\ref{lem:min_faces}, \ref{lem:case1}, ~\ref{lem:case2} and \ref{lem:case3} that the ML degree of $\hat{F}$ equals the mixed volume of $\mathcal{L}(\hat{F})$.

Now we show that the ML degree of $F$ equals the mixed volume of $\mathcal{L}(F)$. First observe that $\newt(f_j) + (\mathbf{1}_n, 0_k) = \newt(\hat{f}_j)$ for $j \in [k]$. Since the mixed volume is translation invariant, this gives
\[
\mvol(\ell_1,\ldots, \ell_n, f_1,\ldots, f_k)
=
\mvol(\ell_1,\ldots, \ell_n, \hat f_1,\ldots, \hat f_k).
\]

For $\phi\in \SL_{n+k}$ given by 
\[
\begin{bmatrix}
I_n & \mathbf{1}_{n\times k}\\
 0_{k\times n}   & I_k
\end{bmatrix},
\]
we have
\begin{align*}
\mvol(\ell_1,\ldots, \ell_n, \hat f_1,\ldots, \hat f_k)
&=
\mvol(\phi\cdot \ell_1,\ldots, \phi\cdot  \ell_n, \phi\cdot \hat f_1,\ldots, \phi\cdot \hat f_k) \\
&=
\mvol(\phi\cdot \ell_1,\ldots, \phi\cdot  \ell_n, \hat f_1,\ldots, \hat f_k).
\end{align*}

Since $\phi(\newt(\ell_i)) \subseteq \newt(\hat{\ell}_i) $,
by monotonicity of mixed volume we get 
\[
\mvol(\phi\cdot \ell_1,\ldots, \phi\cdot  \ell_n, \hat f_1,\ldots, \hat f_k)
\leq 
\mvol(\hat \ell_1,\ldots,  \hat \ell_n, \hat f_1,\ldots, \hat f_k).
\]

Thus far we have shown the inequality $\mvol(\mathcal{L}(F))\leq \mvol(\mathcal{L}(\hat F))$. We claim the following list of equalities also holds:
\begin{align}
\mvol(   \mathcal{L}(\hat{F})  ) \,\,=\,\, \text{ML Degree of } \hat{F} \,\,=\,\, \text{ML Degree of } F \,\,\leq \,\, \mvol(\mathcal{L}(F)). \label{eq:string_of_eqs}
\end{align}

From the argument given above we also have that the 
mixed volume of $\mathcal{L}(\hat{F})$ is equal to the
ML degree of $\hat{F}$. 
By Proposition~\ref{prop:Fhat_equals_F} we have that the ML degree of $\hat{F}$ equals the ML degree of $F$. 
The first part of Theorem~\ref{thm:BKK} tells us that the ML degree of $F$ is upper bounded by the mixed volume of $\mathcal{L}(F)$. The inequalitiy $\mvol(\mathcal{L}(F)) \leq \mvol(\mathcal{L}(\hat{F}))$ paired with \eqref{eq:string_of_eqs} shows that the mixed volume $\mathcal{L}(F)$ equals the ML degree of $F$. 
\end{proof}

\begin{rem}
Theorem~\ref{theorem:main} shows that an optimal homotopy method to find all critical points for maximum likelihood estimation is given by a standard polyhedral homotopy from its ML system. The polyhedral homotopy was presented in \cite{MR1297471}, and there exists off the shelf software implementations \cite{HomotopyContinuationjulia, verschelde1999algorithm, hom4ps2}. 
For the background in homotopy methods in
numerical algebraic geometry see~\cite{bertini-book}.
\end{rem}

\begin{rem}[Sum-to-one-constraint] %
For MLE, typically $f_1 \in F$ will be $x_1 + \ldots + x_n -1$. 
Although this polynomial does not have general coefficients, we can rescale the variables so the traditional MLE situation falls into our set-up. 
\end{rem}

A corollary of our results is that the ML degree of a general sparse polynomial system $F$ depends only on the Newton polytopes.

\begin{cor}
Consider two general sparse polynomial systems: 
$F = \langle f_1,\ldots, f_k\rangle$ and 
$G = \langle g_1,\ldots, g_k\rangle $, 
where $\newt(f_j) = \newt(g_j)$ for $j \in [k]$. 
The ML degree of $F$ equals the ML degree of $G$.
\end{cor}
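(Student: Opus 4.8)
The plan is to reduce the statement to the mixed-volume characterization of the ML degree provided by Theorem~\ref{theorem:main}, and then to observe that every Newton polytope entering that mixed volume is determined by the data $\newt(f_j)=\newt(g_j)$. Since the mixed volume is a function of Newton polytopes alone, once I show that the two ML systems $\mathcal{L}(F)$ and $\mathcal{L}(G)$ have matching Newton polytopes, the equality of ML degrees follows at once.

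First I would pass to the auxiliary systems $\hat{F}=\langle\hat{f}_1,\ldots,\hat{f}_k\rangle$ and $\hat{G}=\langle\hat{g}_1,\ldots,\hat{g}_k\rangle$ with $\hat{f}_j=x_1\cdots x_n\cdot f_j$ and $\hat{g}_j=x_1\cdots x_n\cdot g_j$. By Proposition~\ref{prop:Fhat_equals_F} the ML degree of $F$ equals that of $\hat{F}$, and likewise for $G$ and $\hat{G}$, so it suffices to compare $\hat{F}$ with $\hat{G}$. The payoff of this reduction is twofold: $x_1\cdots x_n\mid\hat{f}_j$ and $x_1\cdots x_n\mid\hat{g}_j$, which is exactly the divisibility hypothesis needed to invoke Lemma~\ref{lem:newt_likelihood}; and translation by $(\mathbf{1}_n,0_k)$ gives $\newt(\hat{f}_j)=\newt(f_j)+\mathbf{1}_n=\newt(g_j)+\mathbf{1}_n=\newt(\hat{g}_j)$, so the hypothesis $\newt(f_j)=\newt(g_j)$ is inherited by the hatted systems.

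Next I would compare the Newton polytopes of the two ML systems term by term. The polytopes of the $\hat{f}_j$ agree by the previous step, so only the Lagrange equations $\hat{\ell}_i$ remain. Here Lemma~\ref{lem:newt_likelihood} gives
\[
\newt(\hat{\ell}_i^{F})=\conv\bigl(\{0_{n+k}\}\cup\Vertex(\lambda_1\hat{f}_1)\cup\cdots\cup\Vertex(\lambda_k\hat{f}_k)\bigr),
\]
together with the analogous formula for $\hat{G}$. Since $\Vertex(\lambda_j\hat{f}_j)=\tilde{e}_j+\Vertex(\hat{f}_j)$ is nothing but the vertex set of $\newt(\hat{f}_j)=\newt(\hat{g}_j)$, we obtain $\Vertex(\lambda_j\hat{f}_j)=\Vertex(\lambda_j\hat{g}_j)$ and hence $\newt(\hat{\ell}_i^{F})=\newt(\hat{\ell}_i^{G})$ for every $i\in[n]$. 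Thus each Newton polytope appearing in $\mathcal{L}(\hat{F})$ matches the corresponding one in $\mathcal{L}(\hat{G})$, so $\mvol(\mathcal{L}(\hat{F}))=\mvol(\mathcal{L}(\hat{G}))$. Applying Theorem~\ref{theorem:main} to each hatted system turns these equal mixed volumes into equal ML degrees, and chaining with the two instances of Proposition~\ref{prop:Fhat_equals_F} yields that the ML degree of $F$ equals the ML degree of $G$.

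The step I expect to require the most care is justifying that the Newton polytope of a Lagrange equation depends only on the polytopes $\newt(f_j)$ and not on their finer monomial supports. Two polynomials can share a Newton polytope while having different supports, and the operator $x_i\,\partial_{x_i}$ can in principle delete monomials whose $i$-th exponent vanishes, so a naive argument on $F$ itself would entangle the support with the polytope. The passage to $\hat{F}$ is precisely what removes this difficulty: once $x_1\cdots x_n\mid\hat{f}_j$, no exponent has a zero coordinate, $x_i\,\partial_{x_i}$ preserves the Newton polytope by the first statement of Lemma~\ref{lem:newt_likelihood}, and the vertex description above depends only on $\newt(\hat{f}_j)$. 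Confirming that this reduction genuinely sidesteps the support-versus-polytope subtlety is the crux; the remaining steps are formal properties of the mixed volume and the two main results already established in the paper.
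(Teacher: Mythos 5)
Your proposal is correct and follows essentially the same route as the paper's own proof: pass to $\hat{F}$ and $\hat{G}$, observe via Lemma~\ref{lem:newt_likelihood} that the ML systems $\mathcal{L}(\hat{F})$ and $\mathcal{L}(\hat{G})$ have identical Newton polytopes, apply Theorem~\ref{theorem:main}, and transfer back with Proposition~\ref{prop:Fhat_equals_F}. Your write-up is more explicit than the paper's (which leaves the equality of the Newton polytopes of the hatted ML systems as an unjustified assertion), and your discussion of why the support-versus-polytope issue is resolved by the divisibility hypothesis is exactly the right point to make.
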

\begin{proof}
Suppose $F$ and $G$ have the same Newton polytopes. Consider $x_1\cdots x_n F = \hat{F}$ and $x_1 \cdots x_n G = \hat{G}$. The ML systems of $\hat{F}$ and $\hat{G}$ have the same Newton polytopes, so by Theorem~\ref{theorem:main} the ML degree of $\hat{F}$ equals the ML degree of $\hat{G}$. Proposition~\ref{prop:Fhat_equals_F} then gives that the ML degree of $F$ equals the ML degree of $\hat{F}$ and likewise for $G$ and $\hat{G}$, giving the~result.
\end{proof}

This is a surprising corollary because the Newton polytopes of $F$ 
do \emph{not} determine the Newton polytopes of the respective ML system.

\begin{ex}
Consider the ML systems $\mathcal{L}(f)$ and $\mathcal{L}(\hat{f})$ from Example~\ref{ex:newtFvsFhat} and Example~\ref{ex:min_faces}. Both of these systems have a mixed volume and ML degree equal to $12$ even though the Newton polytopes of the corresponding 
ML systems
are quite different. 
\end{ex}

\bibliographystyle{abbrv}
\bibliography{refs.bib}    
\end{document}